\def \R{\mathbb{R}}
\def\co{\colon}
\DeclareSymbolFont{rsfs}{U}{rsfs}{m}{n}
\DeclareSymbolFontAlphabet{\mathscrsfs}{rsfs}
\newtheorem{theorem}{Theorem}
\newtheorem{lemma}[theorem]{Lemma}
\newtheorem{corollary}[theorem]{Corollary}
\newtheorem{conjecture}[theorem]{Conjecture}
\newtheorem{proposition}[theorem]{Proposition}
\theoremstyle{definition}
\newtheorem{remark}[theorem]{Remark}
\newtheorem{example}[theorem]{Example}
   \subjclass{58C05, 58K05, 55M30}
\title{Isolated singularities of hypersurfaces}
\date{\today}
\begin{document}
\author{Rustam Sadykov}
\author{Stanislav Trunov}
\begin{abstract} %In this note we study highly degenerate isolated critical points of smooth functions. 

 Introduced by Seifert and Threlfall, cylindrical neighborhoods is an essential tool in the Lusternik-Schnirelmann theory. We conjecture that every isolated critical point of a smooth function admits a cylindrical ball neighborhood. We show that the conjecture is true for cone-like critical points, Cornea reasonable critical points, and critical points that satisfy the Rothe $H$ hypothesis. In particular, the conjecture holds true at least for those critical points that are not infinitely degenerate. 
\end{abstract}

\maketitle

\section{Introduction}

Given a smooth function $f\co M\to \R$ on a manifold $M$ without boundary, a point $x$ in $M$ is said to be \emph{critical} if the differential $d_xf$ of $f$ at $x$ is trivial. A point that is not critical is said to be \emph{regular}. 
We say that a value $c$ of $f$ is \emph{critical} if the fiber $f^{-1}(c)$ contains a critical point.  Critical points of functions could be extremely complicated. For example, the fiber $f^{-1}(c)$ over a regular value $c$ is a hypersurface of $M$ with no singularities. On the other hand, every closed subset $V\subset M$ is the hypersurface level $f^{-1}(c)$ of an appropriately chosen smooth function $f$ on $M$. In other words, critical points of smooth functions are at least as complicated as closed subsets of a manifold. 

Let $x_0$ be a critical point of a smooth function $f$ with critical value $f(x_0)=c$. %{\color{red} A vector field $V$ is \emph{weakly gradient like} if $V(f)> 0$ outside the set of critical points of $f$, and the set of critical points of $f$ coincides with the set of zeroes of $V$.  
We say that an isolating neighborhood $U$ of $x_0$  is \emph{cylindrical} if it consists of trajectories in $f^{-1}[c-\varepsilon, c+\varepsilon]$ of the gradient vector field of $f$. We will always assume that $U$ is a smooth manifold of dimension $\dim M$ with corners such that $U\cap f^{-1}(c-\varepsilon)$ and $U\cap f^{-1}(c+\varepsilon)$ are smooth manifolds with (smooth) boundary. Cylindrical neighborhoods were introduced by Seifert and Threlfall \cite{ST38} in 1938, and were used in \cite{Ro50, Ta68, Da84} and \cite{Co98}. The existence of a cylindrical ball neighborhood, i.e., a cylindrical neighborhood homeomorphic to a ball, is an important property as it allows one to apply, for example, the Lusternik-Schnirelman type argument  and deduce Lusternik-Schnirelman inequalities \cite{Ta68, Co98, ST}.

In this paper we study cylindrical ball neighborhoods and give evidence supporting Conjecture~\ref{c:main_cyl}. We note that in dimensions $\ne 4,5$ all smooth manifolds homeomorphic to a ball are diffeomorphic to a ball. 

\begin{conjecture}\label{c:main_cyl}
Every isolated critical point of a smooth function admits a cylindrical ball neighborhood. 
\end{conjecture}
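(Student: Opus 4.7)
The plan is as follows. Fix an auxiliary Riemannian metric on $M$ and let $X := \grad f$. Because $x_0$ is an isolated critical point, there is a closed geodesic ball $B$ of small radius $\delta$ centered at $x_0$ that contains no other critical point of $f$. Choose $\varepsilon > 0$ so small that $X$ is nowhere tangent to $\partial B$ along $\partial B \cap f^{-1}[c-\varepsilon, c+\varepsilon]$; such $\delta$ and $\varepsilon$ can be arranged by applying the Morse--Sard lemma to $f|_{\partial B}$ for a generic choice of $\delta$. The candidate cylindrical neighborhood is
\[
U := \{\, x \in f^{-1}[c-\varepsilon, c+\varepsilon] : \text{the gradient trajectory through } x \text{ stays in } B\,\}.
\]
By construction $U$ is a compact union of gradient trajectory segments in $f^{-1}[c-\varepsilon, c+\varepsilon]$, and its topological boundary decomposes into a lateral piece on $\partial B$ together with two ``lids'' $\Sigma_\pm := U \cap f^{-1}(c \pm \varepsilon)$. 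The remaining work is to check that $U$ is a smooth manifold with corners and that it is homeomorphic to an $n$-ball.

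Next I would study the lids and the global structure of the flow on $U$. The transversality of $X$ to $\partial B$ in the slab makes each $\Sigma_\pm$ into a compact smooth manifold with boundary inside the level hypersurface $f^{-1}(c \pm \varepsilon)$, and the gradient flow restricts to a diffeomorphism from $\Sigma_-$ minus the stable set of $x_0$ onto $\Sigma_+$ minus the unstable set of $x_0$. Consequently, away from those trajectories that enter or emanate from $x_0$, the set $U$ has a product description (trajectory segment) $\times$ (punctured lid), and near $x_0$ those trajectories glue this product to the single point $x_0$ itself. If one can show that each $\Sigma_\pm$ is an $(n-1)$-ball and that $U$ admits a flow-compatible deformation retraction onto $\{x_0\}$, then $U$ is cellular at $x_0$, and a standard collaring argument together with the Brown--Mazur cellularity criterion identifies $U$ with $D^n$.

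The principal obstacle is exactly the structure of the lids $\Sigma_\pm$ and of the stable and unstable sets of $x_0$. As noted in the introduction, the fiber $f^{-1}(c)$ near an isolated critical point can be an arbitrary closed subset of $M$, so without extra hypotheses there is no a priori reason for $\Sigma_\pm$ to be balls or for a radial contraction of $U$ onto $x_0$ to exist. This is precisely why one retreats to cone-like, Cornea reasonable, or Rothe-$H$ critical points, where a cone coordinate, a model deformation, or a quantitative bound on $|X|$ respectively forces $\Sigma_\pm$ to be balls and supplies the desired radial contraction. Producing such a contraction in complete generality, when $f$ is infinitely degenerate at $x_0$, is the crux of the conjecture and is where a genuinely new idea appears to be required.
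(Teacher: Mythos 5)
The statement you are asked about is a \emph{conjecture}: the paper does not prove it in general, and neither do you --- your write-up is an honest reduction that ends by acknowledging that ``a genuinely new idea appears to be required.'' That acknowledgment is exactly right, and your identification of the crux matches the paper's: everything hinges on the structure of the critical level $f^{-1}(c)$ near $x_0$, which for an arbitrary isolated critical point can be wild. The paper's actual contribution (Theorem~\ref{th:1} and its corollaries) is to carry out your plan under hypotheses that tame precisely this piece: cone-likeness (or hypothesis $H$, or reasonability) supplies a nowhere-zero ``radial'' vector field $v$ on $V\setminus\{x_0\}$, which is then combined with $w=f\cdot\nabla f$ to produce a vector field $v+w$ exhibiting $H$ as a cone over $\partial H$, whence $H$ is a ball. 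So your paragraph on where cone-likeness enters is accurate, and your overall architecture (gradient-saturated slab, two lids, corner-smoothing, cone/cellularity argument at $x_0$) is the same as the paper's.

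Two technical points where your setup diverges from the paper's and would cause trouble even in the cone-like case. First, you define $U$ as the trajectories that \emph{stay in a geodesic ball} $B$; but if $\grad f$ is transverse to $\partial B$ throughout the slab, any trajectory touching $\partial B$ at an interior time crosses it, so the lateral boundary of your $U$ lies in the \emph{interior} of $B$ and is governed by first-exit behavior rather than by a fixed transverse hypersurface --- there is no reason for it to be smooth. The paper instead anchors the lateral boundary to a slice $V=B_\varepsilon\cap f^{-1}(c)$ of the critical level with $\partial B_\varepsilon$ transverse to $f^{-1}(c)$, and takes all trajectories meeting $V$ or limiting to $x_0$; the corner structure then comes from flowing $\partial V$ up and down. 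Second, your appeal to Morse--Sard to get $\grad f$ nowhere tangent to $\partial B$ on the whole slab $\partial B\cap f^{-1}[c-\varepsilon,c+\varepsilon]$ does not work as stated: tangencies accumulating on $\partial B\cap f^{-1}(c)$ are not controlled by genericity of the radius alone, and securing transversality of $\partial B_\varepsilon$ to the (possibly non-manifold) critical level is itself part of what the cone-like/algebraic hypotheses buy (via Milnor and King). Neither point is fatal to your proposal as a \emph{plan}, since you do not claim to complete it; but if you intend to pursue the conjecture, the lateral boundary should be set up as in the paper, and the remaining open problem is exactly the one you named.
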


If there is an open subset $U\subset \R^m$, and a coordinate neighborhood $\tau\co U\to M$ of a point $x\in M$  such that $f\circ \tau$ is the restriction of a polynomial, we say that the critical point $x$ of $f$ is \emph{algebraic}. In this case, the singular hypersurface $V=f^{-1}(c)$ is locally an algebraic set. It is known (e.g. see \cite{Mi69}) that for every algebraic critical point $x_0$ of $f$ with $f(x_0)=c$, there is a closed disc neighborhood $B_\varepsilon$ of $x_0$ in $M$ such that $\partial B_\varepsilon$ is transverse to $V$, and  the pair $(B_\varepsilon, V\cap B_\varepsilon)$ is homeomorphic to the cone over the pair $(\partial B_\varepsilon, V\cap \partial B_\varepsilon)$. We say that $(B_\varepsilon, V\cap  B_\varepsilon)$ is a \emph{cone neighborhood pair} for $x_0$.  
 This essential property of algebraic singular points is a starting point for the theory of hypersurface singularities, see \cite{Mi69}.   
It follows (see \cite{Ki78, Ki90} and Proposition~\ref{prop:cone16}) that a critical point $x_0$ admits a cone neighborhood pair if and only if it is \emph{cone-like}, i.e., 
if it admits a cone neighborhood in $f^{-1}(f(x_0))$.

\begin{theorem}\label{th:1} Every cone-like critical point of a smooth function $f\co M\to \R$ admits a cylindrical ball neighborhood. \end{theorem}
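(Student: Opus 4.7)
Set $c=f(x_0)$ and $V=f^{-1}(c)$. By the cone-like assumption and Proposition~\ref{prop:cone16}, there is a closed topological disc $B_\varepsilon\subset M$ containing $x_0$ in its interior, with $\partial B_\varepsilon\pitchfork V$ and a homeomorphism of pairs $(B_\varepsilon,V\cap B_\varepsilon)\cong\mathrm{Cone}(\partial B_\varepsilon,L)$, where $L:=V\cap\partial B_\varepsilon$ is a smooth closed codimension-$2$ submanifold of $M$. I will replace the piece of $\partial B_\varepsilon$ close to $L$ (measured by $f$-value) by a cylinder of gradient trajectories of $f$, obtaining a modified topological disc $B'\subset M$; truncating by a narrow slab $f^{-1}[c-\delta,c+\delta]$ then yields the desired cylindrical ball neighborhood.

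\textbf{Gradient cylinder and modified ball.} Since $L$ is compact and disjoint from the isolated critical point $x_0$, $\grad f$ is nonzero on an open neighborhood of $L$; there the normalized field $X=\grad f/|\grad f|^2$ has flow $\varphi_t$ satisfying $f\circ\varphi_t=f+t$. For $\delta>0$ small, the map $\Phi\co L\times[-\delta,\delta]\to M$, $(p,t)\mapsto\varphi_t(p)$, is a smooth embedding whose image $C_\delta$ is a codimension-$1$ submanifold-with-boundary foliated by gradient trajectories of $f$, with $f\circ\Phi(p,t)=c+t$. Transversality $\partial B_\varepsilon\pitchfork V$ at $L$ makes, for $\delta$ small, $A:=\partial B_\varepsilon\cap f^{-1}[c-\delta,c+\delta]$ another smooth collar of $L$, naturally parametrized by $L\times[-\delta,\delta]$ via $f$-value. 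Uniqueness of tubular neighborhoods of $L\subset M$, combined with a level-preserving identification of these two parametrizations, then produces an $f$-preserving ambient isotopy of $M$ supported in a small neighborhood of $L$ (and fixing $x_0$) that carries $A$ onto $C_\delta$. Letting $B'$ be the image of $B_\varepsilon$ under this isotopy, $B'$ is a closed topological disc with $x_0\in\mathrm{int}(B')$, and by $f$-preservation, $\partial B'\cap f^{-1}[c-\delta,c+\delta]=C_\delta$.

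\textbf{The cylindrical ball.} Define $U:=B'\cap f^{-1}[c-\delta,c+\delta]$. For $U$ cylindrical: any gradient trajectory of $f$ meeting $U$ stays inside $B'$ on its intersection with the slab, since the only portion of $\partial B'$ with $f$-value in $[c-\delta,c+\delta]$ is $C_\delta$, itself a disjoint union of gradient trajectories, which no other trajectory can cross. Thus $U$ is a union of gradient trajectories in the slab. For $U$ a topological ball: the cone-pair structure of $B_\varepsilon$ transports via the isotopy to a cone-pair structure on $(B',V\cap B')$; I will use this structure to identify each connected component of the two ``caps'' $B'\cap f^{-1}(c+\delta,\infty)$ and $B'\cap f^{-1}(-\infty,c-\delta)$ of $B'\setminus U$ as a topological open ball attached to $U$ along a disc, so that their removal from the topological disc $B'$ leaves $U\cong B^n$.

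\textbf{Main obstacle.} The delicate step is constructing the $f$-preserving ambient isotopy that matches $A$ with $C_\delta$. Both submanifolds contain $L$ but generally have different tangent planes there, since the direction of $X$ at $L$ is typically not tangent to $\partial B_\varepsilon$; the isotopy must rotate the normal plane of $L$ in $M$ while preserving $f$-values and remaining identity away from $L$. I expect to supply this from uniqueness of tubular neighborhoods of the smooth codimension-$2$ submanifold $L\subset M$, exploiting the parametrization by $f$-value to match $A$ and $C_\delta$ level by level. A secondary concern is verifying the ball structure of $U$ itself, which relies on transporting the cone-pair structure across the isotopy to control the topology of the caps removed from $B'$.
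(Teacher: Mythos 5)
The route you take to the cylindrical neighborhood itself is workable in principle, though more roundabout than necessary: granting the $f$-preserving ambient isotopy carrying $A$ onto $C_\delta$, your $U=B'\cap f^{-1}[c-\delta,c+\delta]$ is exactly the saturation of $V\cap B_\varepsilon$ by gradient trajectories inside the slab, which the paper constructs directly (as the set $H$ of trajectories meeting $V\cap B_\varepsilon$ or limiting to $x_0$) and verifies to be a manifold with corners without any isotopy. The isotopy step you flag as the ``main obstacle'' is delicate but believable; it is not where the proof fails.

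The genuine gap is the ball structure of $U$, which is the actual content of the theorem. Your claim that each component of the caps $B'\cap f^{-1}(c+\delta,\infty)$ and $B'\cap f^{-1}(-\infty,c-\delta)$ is an open ball attached to $U$ along a disc is false already for Morse critical points: for $f=x^2+y^2-z^2$ on $\R^3$ with $B_\varepsilon$ a round ball, the upper cap $\{f>\delta\}\cap B_\varepsilon$ is homotopy equivalent to a circle; for $f=x_1^2+\cdots+x_n^2$ the single nonempty cap is a spherical shell meeting $U$ along a copy of $S^{n-1}$, not a disc. In general the topology of the caps reflects the topology of the nearby level sets (the local Milnor fibre) and is not controlled by the cone-pair structure of $(B',V\cap B')$, because the cone lines of that structure bear no relation to the levels of $f$ away from $f^{-1}(c)$; transporting the cone structure by your isotopy therefore gives you nothing about the caps. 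So the key assertion $U\cong B^n$ is unsupported. The paper closes exactly this gap by using the cone structure of $V\cap B_\varepsilon$ \emph{transversally to the levels}: it builds a radial vector field $v$ on $V\cap B_\varepsilon$ vanishing only at $x_0$, spreads it horizontally over the cylindrical neighborhood along the gradient flow, and adds $w=f\cdot\grad f$; the field $v+w$ is outward normal along the smoothed boundary and has $x_0$ as its unique zero, so the flow of $-(v+w)$ exhibits the neighborhood as a cone over its boundary, hence a ball. Some argument of this kind, rather than cap removal, is needed to finish your proof.
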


A version of Theorem~\ref{th:1} appears in \cite[p. 396]{Ki78} for manifolds $M$ of dimension $m\ne 4$.

\iffalse
\sout{Every cylindrical neighborhood is an adapted neighborhood \cite{Fu21}, i.e., it is a closed isolated connected neighborhood $U$ of a critical point of a continuous function such that the level set $f^{-1}(a)$ is transverse to $\partial U$ for each $a$ in the interior of $f(U)$. In} \cite{Ki78, Ki90}  \sout{King showed that each cone-like critical point of a continuous function on a manifold of dimension $m \ne 4$ admits an adapted ball neighborhood} \cite[Lemma 2.1]{Fu21}.
\sout{Theorem~\ref{th:1} recovers the result of King in the case of smooth functions, and covers the missing case $m=4$. }

{\color{blue} I decided to remove the previous paragraph. In his papers King is very informal. In the second paper on page 3 he says that "by the argument in" the first paper, he can show the existence of a nice cylindrical neighborhood. In the first paper, he gives a 3 line sketch that such a neighborhood is a ball when $m\ne 4$   }
\fi

In \cite{Co98}, Cornea defined a \emph{reasonable critical point} as an isolated critical point $x_0$ of $f$ such that $f^{-1}(f(x_0))$ admits a Whitney stratification into the stratum $\{x_0\}$ and its complement. By \cite[Lemma 2]{Co98} every reasonable critical point admits a cone neighborhood pair, and, therefore,  it
is cone-like. 

\begin{corollary} Every reasonable critical point admits a cylindrical ball neighborhood.
\end{corollary}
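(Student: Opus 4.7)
The proof plan is essentially to chain together the results already stated in the excerpt, so the task reduces to assembling them in the right order. Let $x_0$ be a reasonable critical point of $f\co M\to \R$, so by definition $f^{-1}(f(x_0))$ admits a Whitney stratification whose strata are $\{x_0\}$ and its complement. First I would invoke \cite[Lemma 2]{Co98}, which produces a closed disc neighborhood $B_\varepsilon$ of $x_0$ whose boundary is transverse to $V=f^{-1}(f(x_0))$ and such that $(B_\varepsilon, V\cap B_\varepsilon)$ is homeomorphic to the cone over $(\partial B_\varepsilon, V\cap\partial B_\varepsilon)$; that is, $x_0$ admits a cone neighborhood pair.

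Next I would appeal to Proposition~\ref{prop:cone16} (or equivalently the remark immediately preceding Theorem~\ref{th:1}): the existence of a cone neighborhood pair is equivalent to $x_0$ being cone-like, i.e., admitting a cone neighborhood inside the singular fiber $f^{-1}(f(x_0))$. Having established that the reasonable critical point $x_0$ is cone-like, Theorem~\ref{th:1} applies directly and delivers a cylindrical ball neighborhood of $x_0$, which is exactly the conclusion of the corollary.

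There is no real obstacle to overcome here; the corollary is a formal consequence of Theorem~\ref{th:1} together with Cornea's \cite[Lemma 2]{Co98}, and the entire argument can be written in two or three sentences. The only point worth making explicit, for completeness, is the equivalence between the two notions of cone-likeness (cone neighborhood pair in $M$ versus cone neighborhood in the singular fiber), so that the hypothesis of Theorem~\ref{th:1} is visibly met.
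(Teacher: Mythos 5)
Your argument is correct and is exactly the paper's route: Cornea's \cite[Lemma 2]{Co98} gives a cone neighborhood pair, hence the reasonable critical point is cone-like (the direction of the equivalence you need here is the easy one, since a cone structure on the pair restricts to one on $V=f^{-1}(f(x_0))\cap B_\varepsilon$), and Theorem~\ref{th:1} then yields the cylindrical ball neighborhood.
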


Another large class of critical points was studied by E.~Rothe in \cite{Ro50} and \cite{Ro52}. We say that an isolated critical point $x_0$ satisfies the hypothesis $H$ if there is an isolating coordinate neighborhood $U$ of $x_0$ such that for all $x\ne x_0$ in $U\cap f^{-1}(x_0)$ the vectors $x-x_0$ and $\mathop\mathrm{grad} f(x)$ are linearly independent. 

\begin{corollary}\label{cor_R} Every critical point satisfying the hypothesis $H$ admits a cylindrical ball neighborhood. 
\end{corollary}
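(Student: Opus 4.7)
The plan is to reduce Corollary~\ref{cor_R} to Theorem~\ref{th:1} by showing that any critical point $x_0$ satisfying hypothesis $H$ is cone-like. I would work inside the isolating coordinate chart $U \subset \R^m$ provided by $H$, set $V = f^{-1}(f(x_0))$, and write $\rho(x) = |x - x_0|$. Because $x_0$ is isolated in $\Crit(f)$, every $x \in (V \cap U) \setminus \{x_0\}$ is a smooth point of $V$ with $T_x V = (\grad f(x))^\perp$, while the tangent space at $x$ to the Euclidean sphere $\partial B_{\rho(x)}(x_0)$ is $(x - x_0)^\perp$. These two hyperplanes coincide precisely when $\grad f(x)$ and $x - x_0$ are parallel, so the linear independence condition in $H$ is equivalent to transversality of every coordinate sphere $\partial B_r(x_0) \subset U$ with $V$, and equivalently to the assertion that $\rho$ restricts to a smooth submersion on $V \cap (U \setminus \{x_0\})$.

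Next, I would pick $\varepsilon > 0$ with $\overline{B_\varepsilon(x_0)} \subset U$ and write $B = B_\varepsilon(x_0)$. The restricted map $\rho \co V \cap (B \setminus \{x_0\}) \to (0, \varepsilon]$ is then a proper smooth submersion onto a manifold with boundary, so Ehresmann's fibration theorem yields a diffeomorphism
\[
V \cap (B \setminus \{x_0\}) \;\cong\; (V \cap \partial B) \times (0, \varepsilon]
\]
compatible with $\rho$. Attaching $x_0$ as the cone point identifies $V \cap B$ with the closed cone on the compact set $V \cap \partial B$; continuity at the apex is automatic because in a cone every sequence with cone-parameter tending to $0$ converges to the apex, and $\rho(x_n) \to 0$ whenever $x_n \to x_0$. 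This exhibits a cone neighborhood of $x_0$ in $V$, so $x_0$ is cone-like, and Theorem~\ref{th:1} delivers the desired cylindrical ball neighborhood.

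The only technical step is the Ehresmann construction itself: one must build a smooth vector field $\xi$ on $V \cap (B \setminus \{x_0\})$ with $d\rho(\xi) \equiv 1$. Hypothesis $H$ guarantees $d\rho|_{T_x V} \neq 0$ at every such $x$, so lifts of the field $\partial/\partial\rho$ exist locally, and these are patched by a standard partition-of-unity argument on the paracompact manifold $V \cap (B \setminus \{x_0\})$. This patching is the sole place where $H$ is used directly; everything else is formal, and the conclusion then follows from Theorem~\ref{th:1}.
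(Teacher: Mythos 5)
Your proposal is correct and follows essentially the same route as the paper: both arguments use hypothesis $H$ to produce a nowhere-vanishing radial direction tangent to $V\setminus\{x_0\}$ (the paper projects $x-x_0$ onto $T_xV$ and flows along the resulting vector field, which is exactly a lift of $\partial/\partial\rho$ up to positive rescaling, while you package the same flow as an Ehresmann trivialization of $\rho|_V$), conclude that $V\cap B$ is a cone over $V\cap\partial B$ so that $x_0$ is cone-like, and then invoke Theorem~\ref{th:1}.
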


We will show that the set of jets of non cone-like critical points is of infinite codimension  in the space of all jets, see Corollary~\ref{c:t17}. In other words, non cone-like critical points are extremely rare and infinitely degenerate.  We also deduce that the critical point of any infinitely determined map germ is cone-like, see Proposition~\ref{p:inf20}. 

Finally, we show that the class of critical points that admit a cylindrical ball neighborhood diffeomorphic to a ball is larger than the class of cone-like critical points. 

\begin{theorem}\label{th:Tak} For every $m\ge 4$ there is a smooth function $f\co \R^m\to \R$ with an isolated critical point $x_0$ such that $x_0$ is not cone-like and such that $x_0$ admits a cylindrical  neighborhood diffeomorphic to a ball. 
\end{theorem}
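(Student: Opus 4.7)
The plan is to construct $f$ by modifying a carefully chosen base function $f_0\co\R^m\to\R$—one with an isolated critical point at $0$ that is infinitely flat there—by adding a sequence of localised perturbations $\eta_n$ supported in small disjoint balls $B_n\subset\R^m$ accumulating at $0$. The perturbations are designed so that the zero level of $f:=f_0+\sum_n\eta_n$ picks up pairwise non-homeomorphic closed hypersurfaces $\Sigma_n\subset B_n$, producing topology at the critical level that cannot be a cone, while leaving the gradient flow close enough to that of $f_0$ to preserve a smooth ball structure on the cylindrical neighborhood of $0$. Infinite flatness of $f_0$ at $0$ is essential, because by the Morse lemma any isolated critical point with non-degenerate Hessian is automatically cone-like.

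First, pick $f_0$ with $\nabla f_0\ne 0$ on $\R^m\setminus\{0\}$, infinitely flat at $0$, and such that a standard cylindrical neighborhood of $0$ with respect to $f_0$ is diffeomorphic to $D^m$; this can be arranged by taking a rotationally symmetric flat radial model and then perturbing to break residual critical spheres. Next, pick disjoint closed balls $B_n$ of radius $\rho_n$ centred at points $p_n\to 0$, and, inside each $B_n$, a smooth closed hypersurface $\Sigma_n\subset B_n$, with the diffeomorphism types of the $\Sigma_n$ varying with $n$. The hypothesis $m\ge 4$ provides enough room for infinitely many topological types, for example $\Sigma_n\cong S^{a_n}\times S^{m-1-a_n}$ for varying $a_n$. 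Construct smooth bump functions $\eta_n$ compactly supported in $B_n$ so that $(f_0+\eta_n)^{-1}(0)\cap B_n=\Sigma_n$, and set $f:=f_0+\sum_n\eta_n$.

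The delicate quantitative requirements on $\eta_n$ are: (i) $\|\eta_n\|_{C^k}\to 0$ sufficiently fast as $n\to\infty$ for every $k$, so that $f$ is $C^\infty$ on $\R^m$ and infinitely flat at $0$; and (ii) $\|\nabla\eta_n\|_{C^0(B_n)}<\inf_{B_n}\|\nabla f_0\|$, so that the critical set of $f$ remains $\{0\}$. For a small regular value $c>0$, let $U$ be the union of trajectories of $-\nabla f$ in $f^{-1}[-c,c]$ passing through a disc transverse to the flow in $f^{-1}(c)$. On the complement of the $B_n$ the field $\nabla f$ coincides with $\nabla f_0$, so the trajectories of $-\nabla f$ and $-\nabla f_0$ agree there; inside each $B_n$ the trajectories are deflected by an amount controlled by $\|\eta_n\|_{C^1}$. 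Parametrising $U$ explicitly by time along trajectories and initial conditions on the transverse disc exhibits $U$ as a smooth manifold with corners diffeomorphic to the standard cylindrical neighborhood of $f_0$, a ball $D^m$. Because the diffeomorphism is constructed via an explicit smooth flow parametrisation, no exotic smooth structures arise, and the argument works uniformly in all dimensions $m\ge 4$, including the exceptional cases $m=4,5$.

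Finally, to rule out cone-likeness of $0$, consider the links $L_r:=f^{-1}(0)\cap S^{m-1}_r$ as $r\to 0$. A necessary condition for cone-likeness is that the homeomorphism type of $L_r$ stabilises for small $r$. But as $r$ crosses each $\|p_n\|$, the link acquires or loses a cross-section of $\Sigma_n$, whose topological type varies with $n$; hence $L_r$ does not stabilise, contradicting cone-likeness. The principal technical obstacle is the simultaneous balancing of the sizes of the $\eta_n$: each must be large enough on a thin collar of $\Sigma_n$ for $f$ to vanish there, small enough in every $C^k$ norm for $f$ to be $C^\infty$ at $0$, and small enough in $C^1$ that the gradient flow retains the standard cylindrical ball structure. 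The infinite flatness of $f_0$ is exactly what provides the room to reconcile these constraints, since it keeps the budget $\|\nabla f_0\|$ on each $B_n$ proportionate to the size one needs for $\eta_n$.
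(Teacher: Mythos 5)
Your construction breaks down at the very first step, before any of the quantitative balancing you focus on. If $\eta_n$ is supported in a ball $B_n$ with $0\notin B_n$ and $(f_0+\eta_n)^{-1}(0)\cap B_n=\Sigma_n$ is a \emph{closed} hypersurface, then $\Sigma_n$ bounds a compact region $W_n\subset B_n$ by Jordan--Brouwer separation, and $f=f_0+\eta_n$ vanishes on $\partial W_n=\Sigma_n$ while being nonzero on the interior of $W_n$; hence $f$ attains an interior extremum on $W_n$ and acquires a critical point in $B_n\setminus\{0\}$. This is already incompatible with your condition (ii), which forces $\nabla f\ne 0$ on $B_n$, and more importantly it destroys the isolation of the critical point at $0$. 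So the mechanism you propose for producing non-stabilising links --- disjoint closed components $\Sigma_n$ of varying topology accumulating at $0$ --- cannot be realised while keeping $0$ the only critical point. A secondary gap: even granting the construction, the claim that cone-likeness forces the homeomorphism type of $f^{-1}(0)\cap S^{m-1}_r$ to stabilise needs an argument, since a cone neighborhood of $0$ in $f^{-1}(0)$ (the definition used in the paper) need not be compatible with round spheres.

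The paper's route is entirely different and is taken from Takens \cite{Tak}. One starts with a wild compact set $Q\subset\R^{m-1}\subset\R^m$ (the Fox--Artin arc for $m=4$; for $m\ge 5$ an intersection of nested copies of the Newman--Mazur contractible manifold) admitting a continuous map $\tau\co\R^m\to\R^m$ that collapses exactly $Q$ to $0$ and restricts to a diffeomorphism off $Q$, but admitting \emph{no} closed neighborhood $U$ in $\R^{m-1}$ with $U\setminus Q$ homeomorphic to $\partial U\times(0,1]$. One then builds $F$ with $F^{-1}(0)=\R^{m-1}$ and $\partial F/\partial x_m=0$ precisely on $Q$, and defines $f$ by $f\circ\tau=F$. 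Non-cone-likeness comes from the wildness of $Q$, not from varying link topology; the cylindrical ball neighborhood is obtained by straightening a cylindrical neighborhood $H$ of $Q$ for $F$ onto a rectangular cylinder and passing to the quotient $H/Q$, with a separate smoothing discussion in dimensions $4$ and $5$. If you want to salvage a perturbative approach, you would at minimum have to attach the extra topology to the main sheet of $f_0^{-1}(0)$ rather than create closed components, and the existence of such level sets for a critical-point-free function on $B_n$ that is standard near $\partial B_n$ is itself a nontrivial question.
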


The functions $f$ that appear in Theorem~\ref{th:Tak} are constructed by Takens in \cite{Tak}. 
Theorem~\ref{th:Tak} 
implies that critical points of Takens functions are removable, see Corollary~\ref{c:Tak26}. This 
is closely related to the Funar theorem~\cite[Proposition 2.1]{Fu21} asserting that cone-like critical points of smooth maps $f\co M\to N$ of a manifold of dimension $m\le 2k-1$ to a manifold of dimension $k\ge 2$ are topologically removable except possibly when $(m, k)\in \{(2,2), (4, 3), (8,5), (16, 9)\}$, and under an additional condition that the critical point admits an adapted neighborhood diffeomorphic to a ball when $(m,k)=(5,3)$.

Theorem~\ref{th:1} and its corollaries are related to the Lusternik-Schnirelmann category \cite{Co98, Si79, Ta68}. In fact, 
in \cite{ST}  we will show  that a closed manifold of dimension at least $6$ admits a Singhof-Takens filling by $n$ smooth balls if and only if it admits a function with $n$ critical points each of which admits a  cylindrical ball neighborhood.

In section~\ref{th:1} we give a detailed proof of Theorem~\ref{th:1}. Next, in section~\ref{s:3C4} we prove Corollary~\ref{cor_R}, and show that every cone-like critical point admits a cone neighborhood pair. In section~\ref{s:4alg} we review relevant theorems from singularity theory and deduce that the set of jets of map germs with non cone-like critical points is a subset of a proalgebraic set of infinite codimension in the space of all jets. We also show that critical points of infinitely determined map germs are cone-like. Finally, in section~\ref{s:5Tak}, we list properties of Takens critical points, namely: the Takens function germs are flat (Proposition~\ref{p:23Tak}), the critical points of Takens functions admit cylindrical neighborhoods (Proposition~\ref{prop:21}) diffeomorphic to a ball, and the critical points of Takens functions are removable (Corollary~\ref{c:Tak26}). The proof of Theorem~\ref{th:Tak} can also be found in section~\ref{s:5Tak}.

 We are grateful to Louis Funar for valuable comments, and references.

\section{Proof of Theorem~\ref{th:1}}

Let $f$ be a smooth function on a manifold $M$ without boundary. 
 We choose a Riemannian metric on $M$. It defines a gradient flow $\gamma_t$ on $M$ such that the trajectory $t\mapsto \gamma_t(y)$ of any point $y$ is a curve in $M$ parametrized by an open interval $(t_1, t_2)$ where $-\infty \le t_1 < t_2\le \infty$. Every trajectory $t\mapsto \gamma_t(y)$ is either closed in $M$ or the limit of $\gamma_t(y)$ as $t\to \infty$ or $t\to -\infty$ is a critical point of $f$. For example, for every critical point $x\in M$, its trajectory is a constant curve $t\mapsto x$ parametrized by $t\in (-\infty, \infty)$.  The union $D$ of all non-closed trajectories of the gradient flow as well as all critical points of $f$ is a closed subset of $M$.  The closed set $D$ is a union of closed subsets $D(x)$ of points $y$ such that for the trajectory $t\mapsto \gamma_t(y)$ the limit of $\gamma_t(y)$ as $t\to \infty$ or $t\to -\infty$ is the critical point $x$. 

For any value $a$ of the function $f$, let $M_a$ denote the level $f^{-1}(a)$. The intersection of $D$ with $M_a$ will be denoted by $D_a$. Similarly, $D_a(x)$ denotes the intersection of $D(x)$ and $M_a$. 

The gradient flow $\gamma_t$ defines a diffeomorphism 
\[
h_{a,b}\co M_a\setminus D_a\to M_b\setminus D_b
\]
for every pair $(a,b)$ of real numbers such that  $a <b$ by associating to a point $x$ a unique point in $M_b\setminus D_b$ on the trajectory $t\mapsto \gamma_t(x)$ of $x$. We will also write $h_{b, a}$ for the inverse of $h_{a,b}$.

Suppose now that $x_0$ is a cone-like singular point of a smooth function $f$. 
 Since $x_0$ is isolated, we may assume that $f(x_0)=0$, and $x_0$ is the only critical point on the singular hypersurface $M_0$. Similarly, we may assume that there are regular values $c$ and $-c$ of $f$ such that $0$ is a unique critical value in the interval $(-c-\epsilon, c+\epsilon)$ for some $\epsilon>0$.

Let $(B_\varepsilon, M_0\cap B_\varepsilon)$ be an isolating cone neighborhood pair for $x_0$.  We recall that $\partial B_\varepsilon$ intersects $M_0$ transversally. We will write $V$ for $M_0\cap B_\varepsilon$. 
Let $H$ denote the subset of $M_{[-c, c]}$ of points $x$ such that either the gradient curve through $x$ intersects $V$ or one of the limits of the gradient curve through $x$ is $x_0$, where $M_{[-c, c]}$ is the submanifold of $M$ of points $x$ such that $f(x)\in [-c, c]$. We write $H_a$ for the intersection of $H$ and $M_a$. By choosing $V$ and $c$ sufficiently small, we may assume that $H\subset B_\varepsilon$.

\begin{lemma}
The subsets $H_c\subset M_c$ and $H_{-c}\subset M_{-c}$ are smooth submanifolds with (smooth) boundary. 
The subset $H\subset M$ is a smooth manifold of codimension $0$ with corners $\partial H_{-c}\sqcup \partial H_{c}$. In other words, $H$ is a smooth cylindrical neighborhood of $x_0$. 
\end{lemma}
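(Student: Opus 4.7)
The plan is to exhibit $H$ as the region in $M_{[-c,c]}$ bounded by three smooth faces: a top face $H_c$, a bottom face $H_{-c}$, and a side face obtained by flowing $\partial V$ across the levels in $[-c,c]$. After verifying that each face is a smooth submanifold with boundary, I will show that the three faces meet transversely along smooth codimension-$2$ edges, thereby giving $H$ the structure of a smooth manifold with corners.

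The side face is the simplest. The submanifold $\partial V = V \cap \partial B_\varepsilon$ of $M_0$ is smooth, closed, and consists entirely of regular points of $f$, so the reparametrization by $f$-level
\[
\Gamma\co \partial V \times [-c, c] \to M_{[-c,c]}, \qquad \Gamma(p, s) := h_{0, s}(p),
\]
is a well-defined smooth map. It is an immersion because $T_p \partial V$ and $\grad f(p)$ are linearly independent, and it is injective because distinct gradient trajectories through regular points are disjoint. Hence $T := \Gamma(\partial V \times [-c, c])$ is a smooth compact embedded codimension-$1$ submanifold of $M_{[-c,c]}$ with boundary $T \cap (M_{-c} \cup M_c)$.

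The key step is to verify that $H_c$ is a smooth submanifold with boundary of $M_c$. Since $x_0$ is the only critical point of $f$ in $M_{[-c,c]}$, a point $y \in M_c$ lies in $H_c$ if and only if either its backward trajectory crosses $M_0$ at a point of $V$, or its trajectory limits to $x_0$; thus
\[
H_c = h_{0,c}(V \setminus \{x_0\}) \cup D_c(x_0).
\]
I analyze the local structure of $H_c$ at three types of points. At $y = h_{0,c}(p)$ with $p$ in the relative interior of $V$ in $M_0$, the diffeomorphism $h_{0,c}$ carries an open neighborhood of $p$ in $V$ onto an open neighborhood of $y$ in $M_c$ contained in $H_c$, so $y$ is an interior point of $H_c$. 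At $y \in D_c(x_0)$ (which, after shrinking $c$, lies in $B_\varepsilon$), any nearby $y' \in M_c$ either has trajectory also limiting to $x_0$ or has backward trajectory crossing $M_0$ at a point close to $x_0$; since the cone point $x_0$ lies in the relative interior of $V$ in $M_0$, such crossings lie in $V$, so $y' \in H_c$ and $y$ is again an interior point of $H_c$. At $y = h_{0,c}(p)$ with $p \in \partial V$, the smooth codimension-$1$ hypersurface $\partial V$ locally separates $M_0$ into the $V$-side and its complement, and the local diffeomorphism $h_{0,c}$ transfers this half-space structure to $M_c$; hence near $y$ the set $H_c$ is exactly the closed half-space of $M_c$ bounded by $h_{0,c}(\partial V) = T \cap M_c$. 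This shows that $H_c$ is a smooth submanifold with smooth boundary $\partial H_c = T \cap M_c$, and the analogous argument with $h_{0,-c}$ and $D_{-c}(x_0)$ handles $H_{-c}$.

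To glue the three faces, one checks that the interior of $H$ in $M_{[-c,c]}$ is an open subset of $M$ and hence a smooth codimension-$0$ submanifold. At each boundary point $q \in \partial H_c = T \cap M_c$ the tangent spaces satisfy $T_q H_c + T_q T = T_q M$ and $T_q H_c \cap T_q T = T_q \partial H_c$, so the top and side faces meet transversely along a smooth codimension-$2$ edge; and similarly at $\partial H_{-c}$. This gives $H$ the structure of a smooth codimension-$0$ manifold in $M$ with corners whose corner set is $\partial H_{-c} \sqcup \partial H_c$. The main obstacle is the analysis at $D_c(x_0)$: a priori the set of points whose trajectory limits to $x_0$ could create additional boundary of $H_c$, and it is precisely the cone-like hypothesis, which places $x_0$ in the relative interior of $V$, that forces those points to be interior to $H_c$.
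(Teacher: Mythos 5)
Your decomposition of $H$ into a top face $H_c$, a bottom face $H_{-c}$, and a vertical face swept out by $\partial V$ under the level-preserving flow maps $h_{0,s}$ is exactly the paper's strategy, and your treatment of the three types of points of $H_c$ (images of the relative interior of $V$, images of $\partial V$, and $D_c(x_0)$) mirrors the paper's case analysis. The first two cases and the corner/transversality discussion are fine.

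There is, however, a genuine gap at the point you yourself identify as ``the main obstacle.'' For $y\in D_c(x_0)$ you assert that any nearby $y'\in M_c$ either also limits to $x_0$ or ``has backward trajectory crossing $M_0$ at a point close to $x_0$,'' and you give no argument for this. It is not a consequence of continuous dependence on initial conditions, because the time (and the arc length) needed for the backward trajectory of $y'$ to reach $M_0$ diverges as $y'\to y$; over any fixed time interval the trajectories of $y'$ and $y$ stay close, but that says nothing about where the trajectory of $y'$ eventually meets $M_0$. The paper closes exactly this step with a compactness argument: if there were $y_i\to y$ whose trajectories met $M_0$ at points $z_i\in M_0\setminus V$, one passes to an accumulation point $z$ of the $z_i$ in the compact set $M_0$; since $z$ lies in the closure of $M_0\setminus V$ and $x_0$ is interior to $V$, $z$ is a regular point, so its backward trajectory reaches the level $c$ at some point $w$, and continuity of the flow near $z$ forces $y_i\to w$, hence $w=y$ --- contradicting the fact that the trajectory of $y\in D_c(x_0)$ never reaches $M_0$. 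You need to supply this (or an equivalent) argument; without it the claim that $D_c(x_0)$ consists of interior points of $H_c$, and hence that $\partial H_c$ is exactly $h_{0,c}(\partial V)$, is unproved.
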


\begin{proof} Let $x$ be a point in $H\setminus D$. Suppose that $f(x)\in (-c, c)$. We will omit the cases $f(x)=c$ and $f(x)=-c$ as the argument in these cases  is similar. 
Without loss of generality we may assume that $f(x)\le 0$; the case $f(x)\ge 0$ is similar.  

\iffalse
Let $\delta$ denote the gradient flow of $f$. Then for some $t\ge 0$, the point $\delta_{t}(x)$ is  in $V$.  If  $\delta_t(x)$ is in the interior of the set $V\setminus (\{x_0\}\cup \partial V)$, then all points sufficiently close to $x$ in $M$ lie on flow lines passing through the interior of $V$. Thus, an open neighborhood of $x$ in $M_{[-c, c]}$ is in $H$.  

The argument in the case $\delta_t(x)\in \partial V$ is similar. Namely, suppose that $x\in H$ is a point with $\delta_t(x)\in \partial V$. Then for a small neighborhood $U$ of $x$ in $M$, there is a smooth map $\varphi\co U\to M_0$  that takes $y\in U$ to a unique point $\varphi(y)\in M_0$ on the trajectory of $y$. In view of the diffeomorphism $h_{f(x), 0}$,  the map $\varphi$ is a submersion onto a small neighborhood of $\delta_t(x)$. Therefore $x$ admits a half disc neighborhood in $H$. 
\fi

Let $\delta$ denote the gradient flow of $f$. Then for some $t\ge 0$, the point $\delta_{t}(x)$ is  in $V$. Suppose $x\in H$ is a point with $\delta_t(x)\in \partial V$. Then for a small neighborhood $U$ of $x$ in $M$, there is a smooth map $\varphi\co U\to M_0$  that takes $y\in U$ to a unique point $\varphi(y)\in M_0$ on the trajectory of $y$. In view of the diffeomorphism $h_{f(x), 0}$,  the map $\varphi$ is a submersion onto a small neighborhood of $\delta_t(x)$. Therefore $x$ admits a half disc neighborhood in $H$. The case where $\delta_t(x)$ is in the interior of $V\setminus\{x_0\}$ is similar.

Suppose now that $x\in D\cap H$. We still assume that $f(x)\le 0$. We need to show that for every point $y$ sufficiently close to $x$, either $y\in D$ or the trajectory of the gradient flow of $f$ through $y$ either intersects $V$. Assume that $y$ is not in $D$, and 
the trajectory through $y$ does not intersect $V$. Then 
there is a sequence of points $y_i$ approaching $x$ such that each trajectory $t\to \delta_t(y_i)$ passes through a point $z_i$ in $M_0\setminus V$ at some moment $t_i$. We may assume $f(y_i)=f(x)$. 
Since $M_0$ is compact, there is an accumulating point $z$ of the points $z_i$ in $M_0$. In fact, by dropping some of the points $y_i$ (and $z_i$), we may assume that $z$ is a limit point of $z_i$. 
We claim that the trajectory of the gradient flow of $f$ through the point $z$ passes through the point $x$, which contradicts the assumption that the trajectory of the gradient flow through $x$ has a limit point at $x_0\in V$.  To prove the claim, let $y$ denote the point on the trajectory $t\mapsto \delta_{-t}(z)$ such that $f(y)=f(x)$. Then the trajectories through the points $z_i$ sufficiently close to $z$ intersect the level set $f^{-1}(f(y))$ at points close to $y$, i.e., the points $y_i$ approach the point $y$. Thus, $y$ coincides with $x$.

Finally, the maps $h_{-c,0}$ and $h_{0,c}$ define diffeomorphisms $\partial H_{-c}\to \partial V$ and $V\to \partial H_{c}$, and therefore, the subsets $H_c$ and $H_{-c}$ are smooth submanifolds with smooth boundary. \end{proof}

\begin{proposition}\label{prop:6} The manifold $H$ is a cone over $\partial H$ with vertex at $x_0$. 
\end{proposition}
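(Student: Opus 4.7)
The plan is to construct an explicit homeomorphism $\Phi\co C(\partial H)\to H$ mapping the cone vertex to $x_0$, by simultaneously contracting along the cone structure on $V$ and along the gradient flow. Here $C(\partial H) = \partial H \times [0,1]/(\partial H \times \{0\})$.

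Fix a cone homeomorphism $c_V\co \partial V \times [0,1]/(\partial V \times \{0\}) \to V$ sending the vertex to $x_0$, which exists because $x_0$ is cone-like. Every point $p \in H$ belongs to one of two types. Either the trajectory through $p$ crosses $V \setminus \{x_0\}$ at a unique point $y_p = c_V(\theta_p, s_p)$ with $s_p \in (0, 1]$, in which case $p$ is determined by the triple $(\theta_p, s_p, \tau_p)$ with $\tau_p = f(p) \in [-c, c]$; or the trajectory through $p$ limits to $x_0$ as $t\to\pm\infty$, in which case $p$ lies in the stable or unstable manifold of $x_0$ inside $H$ and is determined by its trajectory together with its $f$-value $\tau_p$.

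Define a radial contraction $\varphi_t\co H\to H$ for $t\in[0,1]$ as follows. For $p$ of the first type, set $\varphi_t(p) = \delta_{t\tau_p}(c_V(\theta_p, t s_p))$, which simultaneously shrinks the cone parameter toward the vertex and the trajectory time toward level $0$. For $p$ of the second type, let $\varphi_t(p)$ be the point on the trajectory through $p$ at $f$-value $t\tau_p$, with $\varphi_0(p) = x_0$ understood as the trajectory's limit. Then $\varphi_0 \equiv x_0$, $\varphi_1 = \id$, and each $\varphi_t$ for $t \in (0,1]$ is injective. The formula $\Phi(p,t) = \varphi_t(p)$ for $p \in \partial H$ respects the quotient at $t=0$ and so descends to a map $\Phi\co C(\partial H)\to H$. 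Once continuity and bijectivity of $\Phi$ are established, the conclusion follows because $C(\partial H)$ is compact and $H$ is Hausdorff, so the continuous bijection is automatically a homeomorphism.

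The main obstacle is verifying continuity of $\Phi$ at the locus where first-type and second-type points meet, namely along the traces in $\partial H$ of the stable and unstable manifolds of $x_0$. Concretely, given a sequence $p_n$ of first-type points converging to a second-type limit $p$, one must show that $s_{p_n}\to 0$ while the combined contraction $\delta_{t\tau_{p_n}}(c_V(\theta_{p_n}, t s_{p_n}))$ converges to the pure-trajectory contraction $\varphi_t(p)$ for every $t\in[0,1]$. This relies on the fact, established in the previous lemma, that $H$ is a smooth compact manifold with corners, together with uniqueness of $x_0$ as the critical point in $M_{[-c,c]}$: these force the trajectories through the $y_{p_n}\to x_0$ to converge uniformly on compact subsets of $(0,\tau_p]$ to the trajectory through $p$, and the simultaneous scaling of the cone parameter by $t$ absorbs the remaining time spent near $x_0$.
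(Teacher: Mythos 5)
Your reduction to checking continuity of $\Phi$ at the interface between first- and second-type points correctly identifies where the difficulty lies, but the continuity you assert there is not merely unproven --- it can genuinely fail for the data you start with. The problem is that for a first-type point $p_n$ the contraction $\varphi_t$ moves $p_n$ onto a \emph{different} gradient trajectory, namely the one through $c_V(\theta_{p_n}, t s_{p_n})$, and nothing in your argument controls where that trajectory sits at level $t\tau_p$ relative to the trajectory of the limiting second-type point $p$. The cone parametrization $c_V$ is only a homeomorphism and need have no compatibility with the gradient dynamics near $x_0$. Concretely, take $f(x,y,z)=x^2+y^2-z^2$ on $\R^3$, let $p=(\sqrt c,0,0)\in H_c\subset\partial H$ (a second-type point on the unstable manifold) and $p_n=(\sqrt{c+1/n^2},\,0,\,1/n)\to p$ (first-type). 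With the standard radial cone structure on $V=\{x^2+y^2=z^2\}\cap B_\varepsilon$ a direct computation gives $\varphi_t(p_n)\to(\sqrt{tc},0,0)=\varphi_t(p)$; but with the equally legitimate ``spiral'' cone structure $c_V(\theta,s)=(s\cos(\theta+\log s),\,s\sin(\theta+\log s),\,\pm s)$ one has $c_V(\theta,ts)=R_{\log t}\bigl(t\,c_V(\theta,s)\bigr)$, where $R_\alpha$ is rotation by $\alpha$ about the $z$-axis. Since the gradient flow commutes with $R_\alpha$, it follows that $\varphi_t(p_n)\to\sqrt{tc}\,(\cos\log t,\sin\log t,0)\ne\varphi_t(p)$ for generic $t\in(0,1)$, so $\Phi$ is discontinuous. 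Your closing sentence (``the simultaneous scaling of the cone parameter by $t$ absorbs the remaining time spent near $x_0$'') is exactly the step with no justification: convergence of the trajectories through $y_{p_n}$ to the trajectory of $p$ says nothing about the trajectories through the displaced points $c_V(\theta_{p_n},ts_{p_n})$.

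The paper sidesteps this interface problem by working infinitesimally rather than with a level-by-level contraction: it extends the radial vector field of the cone structure on $V$ to a continuous horizontal field $v$ on all of $H$ by flow transport (set to zero on $D(x_0)$), adds the vertical field $w=f\cdot\nabla f$, smooths the corners so that $v+w$ is outward normal along $\partial H$, and takes the cone lines of $H$ to be the integral curves of $-(v+w)$, all of which limit to $x_0$ and meet $\partial H$ exactly once. To rescue your approach you would need either to prove continuity of $\Phi$ for one particular, dynamically adapted choice of $c_V$, or to replace the rays $t\mapsto(\theta,ts,t\tau)$ by integral curves of such a vector field --- at which point you have reproduced the paper's argument.
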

\begin{proof}   Since $V$ is a cone over its boundary with vertex at $x_0$, it is a union of $x_0$ and $\partial V\times [0,1]$. Let $v'$ denote a vector field along the second component in $V\times [0,1]$. Let $\lambda$ denote a monotonic function on $[0,1]$ that is $0$ only at $0$ and $1$ at $1$. Let $\pi$ denote the projection $V\times [0,1]\to [0,1]$. Then $(\lambda\circ \pi)v'$ extends to a vector field $v$ on $V$. 

The vector field $v$ restricts to a tangent radial vector field over the manifold $V\setminus \{x_0\}$, it is trivial only at $0$,  and  any point in $V\setminus \{x_0\}$ escapes $V$ along $v$ in finite time. 

We will now use the gradient vector field $\mathop\mathrm{grad}(f)$  to extend the vector field $v$ over $H$, see Lemma~\ref{l:11}.  For any regular point $x\in H$ of $f$, the tangent space $T_xM$ is the direct sum of the \emph{horizontal} component $T_xM_{f(x)}$ and the \emph{vertical} component generated by $\mathop\mathrm{grad}(f)$. We say that a vector field $u$ over $H\setminus \{x_0\}$ is \emph{horizontal} if the vector $u(x)$ is in $T_xM_{f(x)}$ for all $x\in H\setminus \{x_0\}$.

\begin{lemma}\label{l:11} The vector field $v$ extends to a continuous vector field over $H$ such that $v|H\setminus \{x_0\}$ is a nowhere zero horizontal vector field.
\end{lemma}
\begin{proof} For any point $x\in H\setminus D(x_0)$ with $f(x)<0$,  
there is a unique value $t>0$ such that $\delta_t(x)$ is a point $y$ in $V$, where $\delta_t$ is the gradient flow of $f$. We define $v(x)$ by 
\[
v(x)=\pi\circ (d\delta_t)^{-1}(v(y)),
\]
 where the map $\pi$ projects the tangent space $T_yM$ at $y\in M_{[-c, c]}\setminus x_0$ to the tangent space $T_yM_{f(y)}$ of the level manifold. 
 
 We claim that $v(x)$ is non-zero. Since $\delta_t$ takes gradient curves of $f$ to gradient curves of $f$,  it takes $\nabla f(y)$  to   $\nabla f(x)$, and therefore it takes $v(y)$ to a vector linearly independent from $\nabla f(x)$, i.e., to a vector that is not in the kernel of  $\pi$. 
 
 Similarly, for $x$ in $H\setminus D(x_0)$ with $f(x)>0$ we define 
 \[
 v(x)=(\pi\circ d\delta_t)(v(y)), 
 \]
 where $x=\delta_t(y)$ for some $t>0$ and $y\in V$. Finally, we set $v(x)=0$ for 
all $x\in D(x_0)$. Then $v$ is a desired continuous vector field over $H$. 
\end{proof}

 Let $w$ denote $f\cdot \nabla f$.  Then $w(x)$ vanishes precisely over the set $M_0$ as well as over the critical points of $f$.

\begin{lemma} The vector field $v+w$ is a continuous vector field on $H$ which is nowhere zero on $H\setminus \{x_0\}$. 
\end{lemma}
\begin{proof}
Since $w$ and $v$ are continuous vector fields, we deduce that $w+v$ is a continuous vector field over $H$. It is easily verified that $w+v$ is trivial only at $x_0$. 
\end{proof}
\iffalse
\sout{We claim that $w+v$ is trivial only at $x_0$. Indeed, assume to the contrary that $x\in H$ is a regular point of $f$ with $w(x)=-v(x)$. Over $V\setminus \{x_0\}$ we have $v(x)\ne 0$ and $w(x)=0$. Over $D(x_0)\cap (H\setminus x_0)$ we have $v(x)=0$ and $w(x)\ne 0$. Suppose that $x$ is not on $D(x_0)\cup V$. We may assume that $f(x)<0$ as the case $f(x)>0$ is similar. Then for some $t>0$ the point $y=\delta_t(x)$ is in $V\setminus \{x_0\}$. The diffeomorphism $\delta_t$ takes the curve $t\mapsto \delta_t(x)$ to itself, it takes the point $x$ to $y$,  and it takes the vector $v(y)$ normal to the curve to a vector not tangent to the curve $t\mapsto \delta_t(x)$. On the other hand, the vector 
$w(x)$ is tangent to the curve $t\mapsto \delta_t(x)$. Thus $v(x)+w(x)\ne 0$.}}
\end{proof}
\fi

\begin{lemma} The corners of $H$ can be smoothened so that $v+w$ is still defined on the modified manifold $H$ and $v+w$ is nowhere tangent to $\partial H$. 
\end{lemma}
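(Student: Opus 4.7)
The plan is to verify that $v + w$ is outward transverse to each of the three smooth faces of $\partial H$, and then to apply a standard corner-rounding inside $H$ that preserves this transversality, producing a smooth manifold $\tilde H \subset H$ to which $v + w$ restricts unchanged. Because the smoothing takes place in arbitrarily small neighborhoods of the corners, the rest of the proof of Proposition~\ref{prop:6} (using $v + w$ to trivialize the cone structure) is unaffected by the replacement of $H$ by $\tilde H$.

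The main step is a face-by-face transversality analysis. The three smooth faces of $\partial H$ are $H_c$, $H_{-c}$, and the lateral face $L$ swept out by the gradient trajectories through $\partial V$. On $H_{\pm c}$ the term $w = \pm c\, \nabla f$ is a nonzero multiple of $\nabla f$ pointing out of $H$, while $v$ is horizontal by Lemma~\ref{l:11}, so $v + w$ has a nonzero normal component and is outward transverse. On $L$ the term $w = f\nabla f$ is tangent to the trajectories and hence to $L$, so the entire transversality must come from $v$. For $x \in L$ with $\delta_t(x) = y \in \partial V$, one has $v(x) = \pi \circ (d\delta_t)^{-1}(v(y))$; combining this with the $\nabla f$-orthogonality of level sets, the $\nabla f$-invariance of $L$, and the transversality of $v(y)$ to $\partial V$ inside $M_0$ that is built into the construction of $v$ on $V$, one deduces that $v(x)$ cannot be tangent to the slice $L \cap M_{f(x)}$ inside $M_{f(x)}$, for otherwise transporting it back by $h_{f(x),0}$ would force $v(y) \in T_y \partial V$. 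Moreover, since $v(y)$ escapes $V$ at $y$, the vector $v(x)$ points out of $H \cap M_{f(x)}$, and so does $v + w$.

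At each corner point $p \in \partial H_{\pm c}$, choose coordinates $(x_1, \ldots, x_m)$ on a neighborhood of $p$ in $M$ in which $H$ corresponds to the quadrant $\{x_1 \ge 0,\ x_2 \ge 0\}$, with $\{x_1 = 0\}$ the face in $H_{\pm c}$ and $\{x_2 = 0\}$ the face in $L$. By the preceding paragraph and continuity, after shrinking the chart both the $\partial_{x_1}$- and $\partial_{x_2}$-components of $v + w$ are strictly negative throughout. Now perform a standard corner-rounding: pick a smooth $\phi$ on the chart that vanishes on a smooth arc bulging into the open quadrant, is positive on the rest of the quadrant near $p$, agrees with $x_1$ (respectively $x_2$) off a small neighborhood of the $x_2$-axis (respectively $x_1$-axis), and satisfies $\partial_{x_i}\phi > 0$ along the arc for $i = 1, 2$. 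Set $\tilde H := \{\phi \ge 0\}$ inside the chart and $\tilde H := H$ outside. Then $\tilde H$ is a smooth submanifold of $M$ with smooth boundary, contained in $H$, and along the new boundary arc $\langle d\phi,\, v+w \rangle < 0$, so $v + w$ is nowhere tangent there; elsewhere $\partial \tilde H$ coincides with one of the original smooth faces, where transversality has already been established.

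The main technical obstacle is the transversality check on the lateral face $L$: the gradient term $w$ contributes nothing there, so one must trace $v$ through the flow $\delta_t$ and the horizontal projection $\pi$ and argue that no cancellation can make the result tangent to $L$. The key input is that $\nabla f$ is orthogonal to every level set, which forces the horizontal projection of any $d\delta_t$-transported vector originating in $T_y M_0$ to inherit the transversality of its source vector to $\partial V \subset M_0$. Once the two components of $v + w$ have definite sign at each corner, the remaining corner-rounding is a classical construction in the smoothing theory of manifolds with corners and requires no further ideas.
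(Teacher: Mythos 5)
Your proposal is correct and follows essentially the same route as the paper: a face-by-face check that $v+w$ is outward transverse on the horizontal faces $H_{\pm c}$ (where $w$ dominates normally and $v$ is horizontal) and on the lateral face (where $w$ is tangent and $v$ is outward by construction), followed by a standard corner-rounding in the local quadrant model. The only differences are cosmetic: you carry out the rounding explicitly via a defining function $\phi$ and keep $v+w$ unmodified, whereas the paper cites Pugh and permits a small modification of the field near the smoothened corner.
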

\begin{proof}
Recall that the vector field $w(x)$ is a positively scaled gradient vector field over $M_{(0, c]}$ and it is a negatively scaled gradient vector field over $M_{[-c, 0)}$.  For all non-corner points $x$ of $\partial H$ on the level $\pm c$, the tangent space of $\partial H$ is tangent to the level $M_{\pm c}$, the vector $v(x)$ is also tangent to the level $M_{\pm c}$ while $w(x)$ is normal to $\partial H$ directed outward $H$. Therefore the vector field $v+w$ over $\partial H\cap M_{\pm c}$ is outward normal. 

For all non-corner points $x$ in $\partial H$ that are not on the levels $\pm c$, the vector $v(x)$ is outward normal, while $w(x)$ is tangent to $\partial H$. Again, the vector field $v+w$ is outward normal over $\partial H\setminus M_{\pm c}$. 

Finally, at any corner point $x$, locally $H$ is a quarter subspace of $\R^m$ bounded by the horizontal space $f^{-1}(c_+)$ or $f^{-1}(c_-)$,  and a vertical space composed of gradient flow curves. It follows that the corner of $H$ can be smoothened, and the vector field $v+w$ can be modified near the smoothened corner so that it is outward normal everywhere over $\partial H$, see \cite{Pu02}.  
\end{proof}

To summarize, we smoothened the corners of $H$ so that $H$ is a manifold with smooth boundary $\partial H$. We also constructed a vector field $v+w$ with a unique critical point at $x_0$. The vector field $v+w$ is outward normal everywhere over $\partial H$. 

\iffalse
\begin{lemma}\label{le:10} For every point $x\in H\setminus \{x_0\}$, its flow curve along $-v-w$ approaches $x_0$. 
\end{lemma}
\begin{proof}
 Along the vector field $-w$ every point of $H$ flows towards $V$, while along the vector field $-v$ the points of $H$ stay on the same level of $f$. Consequently, any point in $H\setminus \{x_0\}$ appears arbitrarily close to $V$ after flowing along $-v-w$ for sufficiently long time. On the other hand, by the definition of $v$ any point on $V$ flows towards $x_0$ along $-v$.  
 \end{proof}
\fi

\begin{lemma}\label{le:cylinder} The manifold $H\setminus \{x_0\}$ is diffeomorphic to $\partial H\times [0, \infty)$. 
\end{lemma}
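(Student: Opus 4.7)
The plan is to use the flow of the vector field $-(v+w)$ constructed in the preceding lemmas to exhibit an explicit diffeomorphism $\Phi\colon \partial H\times [0,\infty)\to H\setminus\{x_0\}$ by setting $\Phi(p,t)=\psi_t(p)$, where $\psi_t$ is the flow of $-(v+w)$. Because $v+w$ was arranged to be outward normal on $\partial H$ and to vanish only at $x_0$, the field $-(v+w)$ is inward pointing on $\partial H$ and nonvanishing off $x_0$, which is the right setup for such a flow-based parametrization.

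First I would verify that every forward trajectory $t\mapsto \psi_t(p)$ with $p\in H\setminus\{x_0\}$ is defined for all $t\ge 0$, remains in $H\setminus\{x_0\}$, and converges to $x_0$ as $t\to\infty$. The invariance of $H$ under the backward flow is immediate from the outward-normal property of $v+w$. For the convergence to $x_0$, the key observation is that along $-w=-f\nabla f$ the value of $|f|$ decreases monotonically and tends to $0$, pushing any trajectory toward $V=M_0\cap B_\varepsilon$; meanwhile, $-v$ restricted to $V\setminus\{x_0\}$ is a radial inward field (by the definition of $v$ on the cone $V$) that drags points toward $x_0$. A case analysis of the joint behavior of $-v$ and $-w$, dividing $H$ into the region near $V$ and the region foliated by flow lines of $\nabla f$, should show that every trajectory must ultimately accumulate on $V$ and then tend to $x_0$; this essentially reconstructs the argument of the deleted Lemma. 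Once the trajectory behavior is understood, bijectivity of $\Phi$ follows: surjectivity because any $q\in H\setminus\{x_0\}$ lies on the forward $(v+w)$-orbit of a unique point of $\partial H$ (the orbit escapes $H$ in finite forward time since $v+w$ is nonvanishing and outward normal on $\partial H$), injectivity because flow lines of a nonvanishing vector field do not cross.

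To upgrade $\Phi$ from a homeomorphism to a diffeomorphism, one must address that Lemma~\ref{l:11} only produces a \emph{continuous} vector field $v$, hence only continuous $v+w$; I would replace $v+w$ on $H\setminus\{x_0\}$ by a smooth vector field with the same qualitative properties (nowhere zero, outward normal on $\partial H$, every backward trajectory convergent to $x_0$). This is possible by a partition-of-unity construction, since $v$ is already smooth outside the closed subset $D(x_0)\cap H$, where it is defined by a continuous extension; the smoothed field can be kept arbitrarily $C^{0}$-close to $v+w$ so as to preserve the dynamical features. With the smoothed field, $\psi_t$ is smooth, so $\Phi$ is smooth; its differential at $(p,t)$ is nonsingular because the time direction contributes the nonzero vector $-(v+w)(\psi_t(p))$, which is transverse to the push-forward of $T_p\partial H$ (the latter being tangent to the level sets $\psi_t(\partial H)$). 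Thus $\Phi$ is a local diffeomorphism, and together with bijectivity a diffeomorphism.

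The main obstacle is the interplay between the dynamical step (convergence of all $-(v+w)$-trajectories to $x_0$) and the smoothness step (replacing the continuous $v$ by a smooth field without breaking the conical radial behavior near $x_0$ or the outward normality on $\partial H$). Both hinge on controlling the possibly pathological set $D(x_0)\cap H$, where $v$ is merely the zero extension of a smooth field defined off $D(x_0)$.
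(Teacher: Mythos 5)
Your overall strategy coincides with the paper's: parametrize $H\setminus\{x_0\}$ by flowing $\partial H$ inward along $-(v+w)$, using that the field is nowhere zero off $x_0$ and inward normal on the boundary, and that every backward trajectory tends to $x_0$. Your explicit treatment of the regularity issue (Lemma~\ref{l:11} only yields a \emph{continuous} $v$ on $D(x_0)$, so the flow is not a priori unique or smooth there) is a point the paper passes over in silence, and your partition-of-unity smoothing is a reasonable way to address it, though you should say more about why a $C^0$-small perturbation cannot destroy the convergence of trajectories to $x_0$ --- the paper's convergence argument uses that $v$ is exactly horizontal, so that $f^2$ is strictly decreasing along the flow away from $V$, and this is not stable under arbitrary $C^0$ perturbation near $D(x_0)$.

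The genuine gap is your assertion that every forward trajectory $t\mapsto\psi_t(p)$ of $-(v+w)$ is defined for all $t\ge 0$ and converges to $x_0$ only as $t\to\infty$. The field $v+w$ vanishes at $x_0$ but is merely continuous there, and even after your smoothing on $H\setminus\{x_0\}$ you have no Lipschitz-type control at $x_0$ itself; hence a trajectory may reach $x_0$ in \emph{finite} time, in which case $\Phi(p,\cdot)$ is not defined on all of $[0,\infty)$ and your map $\partial H\times[0,\infty)\to H\setminus\{x_0\}$ does not exist as stated. The paper explicitly acknowledges this possibility (``Suppose now that $t\mapsto \delta^{-v-w}_t(z)$ is well-defined for $t\in[0,a)$ and it is not well-defined for $t\in[0,a]$\dots the limit is $x_0$'') and repairs it by rescaling to $-\lambda(v+w)$ with $\lambda>0$ vanishing to infinite order at $x_0$ and $\lambda\equiv 1$ near $\partial H$, so that every trajectory takes infinite time to approach $x_0$. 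You need this reparametrization (or an equivalent completeness argument); without it the claimed identification with $\partial H\times[0,\infty)$ does not follow.
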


\begin{proof}   Let $\delta^{-v-w}_t$ denote the flow along the vector field $-v-w$.  Suppose that the trajectory $t\mapsto \delta^{-v-w}_t(z)$ is well-defined for $t\in [0, a]$. Since $-v-w$ is inward normal over $\partial H$, the trajectory of any point $z$ does not escape $H$, and therefore $\delta^{-v-w}_a(z)$ is in $H$. Consequently, the curve $t\mapsto \delta^{-v-w}_t(z)$ is well-defined for $t\in [0, a+\varepsilon)$ for some $\varepsilon$. Suppose now that $t\mapsto \delta^{-v-w}_t(z)$ is well-defined for $t\in [0, a)$ and it is not well-defined for $t\in [0,a]$. Since $-v-w$ is nowhere zero, we deduce that the limit of $\delta^{-v-w}_t(z)$ as $t\to a$ is $x_0$.    

Next, we claim that for every point $x\in H\setminus \{x_0\}$, its flow curve along $-v-w$ approaches $x_0$.  Indeed, along the vector field $-w$ every point of $H$ flows towards $V$, while along the vector field $-v$ the points of $H$ stay on the same level of $f$. Consequently, any point in $H\setminus \{x_0\}$ appears arbitrarily close to $V$ after flowing along $-v-w$ for sufficiently long time. On the other hand, by the definition of $v$ any point on $V$ flows towards $x_0$ along $-v$.

Let $\lambda>0$ denote a function on $H\setminus \{x_0\}$ such that  $\lambda(x)$ and all its derivatives tend to $0$ as $x\to x_0$. We may choose the function $\lambda$ so that $\lambda\equiv 1$ near $\partial H$, and every trajectory $\delta^{-\lambda (v+w)}_t(z)$ of $-\lambda (v+w)$ starting from any point $z\in \partial V$ is well-defined for $t\in [0,\infty)$. Then the vector field $-\lambda (v+w)$ defines a diffeomorphism $\varphi\co H\setminus \{x_0\}\to \partial H\times [0, \infty)$ by $\varphi(x)=(y, t)$ for a unique point $y\in \partial H$ and a unique value $t\ge 0$ such that  $\delta_t^{-\lambda (v+w)}(y)=x$. 
\end{proof}

Thus, for every point $x\in H\setminus \{x_0\}$, the flow curve through $x$ passes through a unique point of $\partial H$, and has $x_0$ as its limit point. This implies that $H$ is a cone over $\partial H$.   
\end{proof}

\begin{corollary} The manifold $H$ is a smooth manifold homeomorphic to a ball. 
\end{corollary}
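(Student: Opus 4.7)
My plan is to deduce the corollary directly from Proposition~\ref{prop:6}, which identifies $H$ with the cone over $\partial H$ with vertex $x_0$. Since the cone over $S^{m-1}$ is homeomorphic to $D^m$, the corollary reduces to two claims: that $H$ is smooth, and that $\partial H\cong S^{m-1}$.

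For the smoothness of $H$, the proof of Proposition~\ref{prop:6} already establishes that $\partial H$ is a smooth closed $(m-1)$-manifold after the corner smoothing, and that $H$ is smooth away from $x_0$ as a submanifold of $M$. At $x_0$ itself, $H$ contains an open neighborhood of $x_0$ in $M$, since every point sufficiently close to $x_0$ has a gradient trajectory that either intersects $V$ or tends to $x_0$. Therefore $H$ inherits a smooth chart from $M$ at $x_0$.

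For the identification $\partial H\cong S^{m-1}$, I would use that $x_0$ is an interior point of the topological $m$-manifold $H$, so there is a Euclidean neighborhood $B\subset H$ of $x_0$ with $B\cong\R^m$ and $B\setminus\{x_0\}\simeq S^{m-1}$. On the other hand, Lemma~\ref{le:cylinder} gives $H\setminus\{x_0\}\cong\partial H\times[0,\infty)$, so a conic neighborhood of $x_0$ of the form $\partial H\times[0,\varepsilon)/(\partial H\times\{0\})$ has puncture $\partial H\times(0,\varepsilon)\simeq\partial H$. Nesting a conic neighborhood inside $B$ and $B$ inside a larger conic neighborhood, one concludes that $\partial H$ is a homotopy $(m-1)$-sphere. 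Since $\partial H$ is a closed smooth manifold, the topological Poincar\'e conjecture---elementary for $m-1\le 2$, and due to Smale, Freedman, and Perelman in higher dimensions---identifies $\partial H$ with $S^{m-1}$, whence $H\cong D^m$.

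The hard part will be the invocation of the Poincar\'e conjecture to promote homotopy type to homeomorphism type; the rest is essentially bookkeeping. A more direct alternative would be to construct an explicit homeomorphism $\partial H\to S^{m-1}$ by flowing the vector field $-\lambda(v+w)$ from the proof of Proposition~\ref{prop:6} onto a small round sphere around $x_0$ and appealing to invariance of domain, at the cost of verifying transversality of the flow to such spheres.
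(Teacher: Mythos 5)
Your argument is correct and amounts to the same approach as the paper: the paper disposes of the corollary in one line by citing \cite[Lemma 12]{ST} for the fact that a manifold which is a cone over a manifold is a disc, and your nesting argument (cone neighborhoods interleaved with a Euclidean neighborhood of the interior point $x_0$, forcing $\partial H$ to be a homotopy sphere, then the topological Poincar\'e conjecture) is exactly the standard proof of that cited lemma. The only caveat is your proposed ``more direct alternative'' at the end: transversality of the flow of $-\lambda(v+w)$ to small round spheres about $x_0$ is not guaranteed and should not be relied upon, but since you present it only as an option this does not affect the main argument.
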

\begin{proof} Since $H$ is a cone over a manifold, and $H$ is itself a manifold, it follows that $H$ is a disc, see \cite[Lemma 12]{ST}. 
\end{proof}

Since the vector field $-v-w$ is tangent to $V\setminus \{x_0\}$ over $V\setminus \{x_0\}$, it follows that $(H, V)$ is a cone neighborhood pair for the critical points $x_0$. This completes the proof of Theorem~\ref{th:1}.

\begin{remark}
In the theory of maps with isolated cone-like critical points to manifolds of arbitrary dimension adapted neighborhoods \cite{Fu21} play an important role. In general the definition involves a list of assumptions, but in the case of a map to $\R$ the definition is equivalent to the following one. An \emph{adapted} neighborhood is a closed isolated connected neighborhood $U$ of a critical point  of a continuous function such that the level set $f^{-1}(a)$ is transverse to $\partial U$ for each $a$ in the interior of $f(U)$. Clearly, every cylindrical neighborhood is an adapted neighborhood. 
\end{remark}

\section{Proof of Corollary~\ref{cor_R}}\label{s:3C4}

Let $x_0$ be an isolated critical point of a function $f\co M\to \R$ on a smooth manifold of dimension $n$. By restricting the function $f$ to a coordinate neighborhood about $x_0$, we may assume that $M$ is $\R^n$. 
 In \cite{Ro50} E. Rothe studied the class of critical points satisfying the hypothesis $\mathcal{H}$.

\vskip 3mm
\noindent{\bf Hypothesis $\mathcal{H}$.} There exists an isolating neighborhood $U$ of $x_0$ such that for all $x\ne 0$ in $U\cap f^{-1}(x_0)$ the vectors $x-x_0$ and $\nabla f(x)$ are linearly independent.  
\vskip 3mm

The hypothesis $\mathcal{H}$ is closely related to the hypothesis $\mathcal{H}'$. 

\vskip 3mm
\noindent {\bf Hypothesis $\mathcal{H}'$.} There exists an isolating neighborhood $U$ of $x_0$ such that over the complement to $x_0$ in $V=U\cap f^{-1}(x_0)$ there is a smooth vector field $v$ which is nowhere zero and which is outward normal over $\partial V$. 

\begin{lemma} The hypothesis $\mathcal{H}$ implies the hypothesis $\mathcal{H}'$.
\end{lemma}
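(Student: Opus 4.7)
The plan is to produce the vector field $v$ demanded by $\mathcal{H}'$ by orthogonally projecting the radial vector field $x \mapsto x - x_0$ onto the tangent spaces of the level set $V = U\cap f^{-1}(f(x_0))$; hypothesis $\mathcal{H}$ is precisely what is needed to keep this projection from vanishing.

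First, I would shrink the isolating neighborhood $U$ furnished by $\mathcal{H}$ to a closed coordinate ball $B_\varepsilon(x_0)$ with $\varepsilon$ so small that (i) $x_0$ is the only critical point of $f$ in $B_\varepsilon(x_0)$, so $\nabla f$ is nowhere zero on $B_\varepsilon(x_0)\setminus\{x_0\}$, (ii) $\partial B_\varepsilon(x_0)$ meets $V$ transversally, so $\partial V = V\cap \partial B_\varepsilon(x_0)$ is a smooth submanifold, and (iii) hypothesis $\mathcal{H}$ is inherited by $B_\varepsilon(x_0)$. This is possible because $x_0$ is isolated and because by Sard's theorem almost every small $\varepsilon$ gives transverse intersection.

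Next, on $V\setminus\{x_0\}$ I would set
\[
v(x) \;=\; (x-x_0) \;-\; \frac{\langle x-x_0,\nabla f(x)\rangle}{|\nabla f(x)|^2}\,\nabla f(x).
\]
By construction $\langle v(x),\nabla f(x)\rangle = 0$, so $v(x)\in T_xV$, and $v$ is smooth. If $v(x) = 0$ then $x - x_0$ is a scalar multiple of $\nabla f(x)$, which is forbidden by $\mathcal{H}$; hence $v$ is nowhere zero.

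Finally, I would check outward normality along $\partial V$. At $x\in\partial V$ the outward normal to $\partial B_\varepsilon(x_0)$ is $x-x_0$, and the tangent space of $\partial V$ inside $V$ is $T_xV \cap T_x\partial B_\varepsilon(x_0)$; therefore the outward unit normal to $\partial V$ in $V$ is a positive multiple of the orthogonal projection of $x-x_0$ onto $T_xV$, which is exactly $v(x)$. Positivity of the pairing is confirmed by
\[
\langle v(x), x-x_0\rangle \;=\; |x-x_0|^2 \;-\; \frac{\langle x-x_0,\nabla f(x)\rangle^2}{|\nabla f(x)|^2},
\]
which by Cauchy--Schwarz is non-negative and, again thanks to $\mathcal{H}$, strictly positive. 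The only delicate step is arranging the transverse intersection at the boundary and the identification of the outward normal direction; once these are in place, the lemma follows immediately.
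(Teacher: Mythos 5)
Your proposal is correct and follows essentially the same route as the paper: both shrink $U$ to a small ball whose boundary meets $V$ transversally and take $v$ to be the projection of the radial field $x-x_0$ onto $T_xV$, with hypothesis $\mathcal{H}$ guaranteeing non-vanishing. Your explicit formula for the projection and the Cauchy--Schwarz verification of outward normality along $\partial V$ merely spell out details the paper leaves implicit.
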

\begin{proof} Let $f$ be a smooth function with an isolated critical point $x_0$ that satisfies the hypothesis $\mathcal{H}$, i.e., there exists an isolating neighborhood $U$ of $x_0$ with a certain property. 
Let $r\co U\to \R$ denote the function that associates with $x$ the distance between $x$ and $x_0$ with respect to the Euclidean metric on $\R^n$. 
We may choose a ball neighborhood $U_\varepsilon$ of $x_0$ of radius $\varepsilon$ so that $U_\varepsilon\subset U$ and $\varepsilon$ is a regular value of $r$. Put $V_\varepsilon=U_\varepsilon\cap f^{-1}(x_0)$. 

Since the vectors $x-x_0$ and $\nabla f(x)$ are linearly independent, the projection $v(x)$ of the vector $x-x_0$ to $T_xV$ is non-zero for all $x\in V\setminus\{x_0\}$. Then the vector field $v$ satisfies the hypothesis $\mathcal{H}'$.
\end{proof}

\iffalse
\begin{lemma}\label{l:6} There exists a smooth vector field $v_3$ on $V\setminus \{x_0\}$ which is nowhere zero. Furthermore, the vector field $v_3$ can be chosen so that $v_3|\partial V$ is outward normal.   
\end{lemma}
\begin{proof}  Let $r_3$ denote the smooth function on $V\setminus \{x_0\}$ given by the composition $g_3h_{0,c}$. Let $v_3$ denote the gradient vector field of $r_3$. Since $g_3$ is non-singular on $U\setminus D_c(x_0)$, the vector field $v_3$ is nowhere zero. 
 Since the gradient vector field of $g_3$ over $\partial U$ is outward normal, we deduce that $v$ is outward normal over $\partial V$. 
\end{proof}
\fi 

The proof of the following Lemma~\ref{le:13} is similar to one of Lemma~\ref{le:cylinder}.

\begin{lemma}\label{le:13} The manifold $V\setminus \{x_0\}$ is diffeomorphic to $\partial V\times [0, \infty)$. 
\end{lemma}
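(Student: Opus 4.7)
The plan is to mimic the argument of Lemma~\ref{le:cylinder} almost verbatim, but with the field $v+w$ replaced by the Rothe vector field $v$ supplied by hypothesis $\mathcal{H}'$. Since $v$ is nowhere vanishing on $V\setminus\{x_0\}$ and outward normal along $\partial V$, the reverse flow $\delta^{-v}_t$ points inward on $\partial V$, so no trajectory starting in $V$ can leave through the boundary; hence any maximal positive trajectory remains in the compact set $V$ and is well-defined until (if ever) it reaches $x_0$.

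First I would verify that for every $x\in V\setminus\{x_0\}$ the trajectory $t\mapsto \delta^{-v}_t(x)$ has $x_0$ as its limit. Using the explicit $v$ produced in the proof that $\mathcal{H}\Rightarrow\mathcal{H}'$ — namely, the orthogonal projection of $x-x_0$ onto $T_xV$ — the function $r(x)=|x-x_0|^2$ satisfies
\[
\frac{d}{dt}\,r\bigl(\delta^{-v}_t(x)\bigr)=-2(x-x_0)\cdot v(x)=-2|v(x)|^2<0
\]
on $V\setminus\{x_0\}$, because $x-x_0$ decomposes as $v(x)$ plus a vector normal to $T_xV$. Strict monotonicity together with compactness of $V$ and the fact that $x_0$ is the unique zero of $v$ in $V$ forces the $\omega$-limit of every trajectory to equal $\{x_0\}$.

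Next I would reparametrize the flow as in Lemma~\ref{le:cylinder}. Choose a smooth positive function $\lambda\co V\setminus\{x_0\}\to\R$ with $\lambda\equiv 1$ in a collar of $\partial V$ and with $\lambda$ and all its derivatives tending to $0$ fast enough at $x_0$ that every integral curve of $-\lambda v$ starting on $\partial V$ is defined on all of $[0,\infty)$. Since $-\lambda v$ is nowhere zero, inward-pointing along $\partial V$, and its trajectories foliate $V\setminus\{x_0\}$ with the point $x_0$ as common forward limit, the map
\[
\varphi\co \partial V\times[0,\infty)\longrightarrow V\setminus\{x_0\},\qquad \varphi(y,t)=\delta^{-\lambda v}_t(y),
\]
is a smooth bijection; transversality of $-\lambda v$ to $\partial V$ makes it a local diffeomorphism, hence a diffeomorphism.

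The main obstacle is the limit step: abstractly, a nowhere-zero vector field on $V\setminus\{x_0\}$ with inward boundary behaviour might admit recurrent orbits that never reach $x_0$, so one really does need the monotonicity of $|x-x_0|^2$ coming from the specific radial construction of $v$ under hypothesis $\mathcal{H}$. The subsequent choice of the damping factor $\lambda$ is standard (cf.\ the corresponding step of Lemma~\ref{le:cylinder}) and requires no new idea beyond what is already used there.
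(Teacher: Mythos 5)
Your proof is correct and follows the same route the paper intends: the paper's entire proof of Lemma~\ref{le:13} is the remark that it is ``similar to'' Lemma~\ref{le:cylinder}, i.e.\ reparametrize the flow of $-\lambda v$ to get the product structure, exactly as you do. The one substantive thing you add is the verification that every trajectory of $-v$ converges to $x_0$ via the Lyapunov function $r(x)=|x-x_0|^2$ and the orthogonality $(x-x_0)\cdot v(x)=|v(x)|^2$; this step is genuinely needed (hypothesis $\mathcal{H}'$ alone does not rule out recurrence, as you note) and is left implicit in the paper, so your write-up is a correct completion rather than a deviation.
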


We also note that the limit of $\delta^{-\lambda v}_t(z)$ as $t\to \infty$ is $x_0$ for all $z\in \partial V$. Therefore we deduce Corollary~\ref{c:8}. 

\begin{corollary}\label{c:8} The set $V$ is homeomorphic to the cone over its boundary. 
\end{corollary}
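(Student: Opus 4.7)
The plan is to build an explicit homeomorphism $\bar\psi\co V\to C\partial V$, where $C\partial V=(\partial V\times[0,1])/(\partial V\times\{1\})$, by combining Lemma~\ref{le:13} with the fact that the flow $\delta^{-\lambda v}_t(z)$ limits to $x_0$ as $t\to\infty$ for every $z\in \partial V$.

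First, I would invoke Lemma~\ref{le:13} to get a diffeomorphism $\varphi\co V\setminus\{x_0\}\to \partial V\times[0,\infty)$ of the form $\varphi(x)=(y,t)$ with $\delta^{-\lambda v}_t(y)=x$. Postcomposing with the diffeomorphism $(y,t)\mapsto (y,\tfrac{t}{t+1})$ of $\partial V\times[0,\infty)$ onto $\partial V\times[0,1)$ yields a diffeomorphism $\psi\co V\setminus\{x_0\}\to \partial V\times[0,1)$. Define $\bar\psi\co V\to C\partial V$ to equal $\psi$ on $V\setminus\{x_0\}$ and to send $x_0$ to the cone vertex $[\partial V\times\{1\}]$. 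Bijectivity is immediate, and continuity away from $x_0$ is inherited from $\psi$.

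The substantive step is continuity of $\bar\psi$ at $x_0$, which I would verify by a compactness-plus-uniqueness argument. Suppose $x_n\to x_0$ in $V$ with $\psi(x_n)=(y_n,t_n/(t_n+1))$. If the times $t_n$ did not tend to infinity, then after passing to a subsequence I could assume $t_n\to t_*<\infty$ and, using compactness of $\partial V$, also $y_n\to y_*\in\partial V$. Continuity of the flow would then give
\[
x_n=\delta^{-\lambda v}_{t_n}(y_n)\ \longrightarrow\ \delta^{-\lambda v}_{t_*}(y_*),
\]
and the latter point lies in $V\setminus\{x_0\}$ because the flow of $-\lambda v$ reaches $x_0$ only in the limit $t\to\infty$. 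This contradicts $x_n\to x_0$, so $t_n\to\infty$ and hence $\bar\psi(x_n)$ converges to the cone vertex in $C\partial V$.

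To finish, I would observe that $V$ is compact (being a closed subset of the ball neighborhood $U_\varepsilon$ intersected with $f^{-1}(x_0)$) while $C\partial V$ is Hausdorff, so the continuous bijection $\bar\psi$ is automatically a homeomorphism. The only genuine obstacle is the continuity check at $x_0$, and as outlined this rests on (i) compactness of $\partial V$ to extract convergent subsequences of base points, and (ii) the fact from Lemma~\ref{le:13} that the reparametrized flow $\delta^{-\lambda v}_t$ is a \emph{diffeomorphism} onto $\partial V\times[0,\infty)$ so that no point of $V\setminus\{x_0\}$ can be reached in finite time from a boundary point and then again appear as a limit of points approaching $x_0$.
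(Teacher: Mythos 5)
Your argument is correct and follows the same route the paper intends: Lemma~\ref{le:13} gives the collar $V\setminus\{x_0\}\cong\partial V\times[0,\infty)$, and the fact that the reparametrized flow reaches $x_0$ only as $t\to\infty$ yields the cone structure. You simply supply the point-set details (reparametrization to $[0,1)$, continuity at the vertex via compactness of $\partial V$, and the compact-to-Hausdorff criterion) that the paper leaves implicit.
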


Then the proof of Proposition~\ref{prop:6} shows that a critical point satisfying the hypothesis $H$ admits a  cylindrical ball neighborhood.

The above argument proves a characterizations of a cone neighborhood pair for an isolated critical point. 
 
\begin{proposition}\label{prop:cone16} Let $B_\varepsilon$ be an $\varepsilon$-disc neighborhood of $x_0$ such that $\partial B_\varepsilon$ is transverse to $M_0$. Suppose that $V=B_\varepsilon\cap M_0$ is a cone over its boundary with vertex at $x_0$. Then there exists a cone neighborhood pair for $x_0$. 
\end{proposition}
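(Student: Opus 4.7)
The plan is to follow the proof of Theorem~\ref{th:1} verbatim, since the hypothesis of Proposition~\ref{prop:cone16} supplies exactly the starting data of that proof: an isolating disc $B_\varepsilon$ with $\partial B_\varepsilon$ transverse to $M_0$, and $V = B_\varepsilon \cap M_0$ a cone over $\partial V$ with vertex at $x_0$. I would let $H \subset M_{[-c, c]}$ be the set consisting of $x_0$ together with all points whose gradient trajectory meets $V$; by that proof $H$ is a cylindrical ball neighborhood with $H \cap M_0 = V$ and with $\partial H \cap V = \partial V$. It therefore suffices to construct a homeomorphism of pairs $(H, V) \to (\mathrm{cone}(\partial H), \mathrm{cone}(\partial V))$.

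The key observation, already made at the end of the proof of Theorem~\ref{th:1}, is that the vector field $v + w$ assembled there is tangent to $V \setminus \{x_0\}$: the summand $v$ was constructed to be radial on $V$ and extended horizontally off $V$, while $w = f \cdot \nabla f$ vanishes identically on $V$ because $f \equiv 0$ there. Consequently the backward flow $\delta^{-\lambda(v+w)}_t$ of Lemma~\ref{le:cylinder}, which diffeomorphically identifies $H \setminus \{x_0\}$ with $\partial H \times [0, \infty)$, restricts to a diffeomorphism $V \setminus \{x_0\} \cong \partial V \times [0, \infty)$. Reparametrizing $[0, \infty)$ as $[0, 1)$ and sending $x_0$ to the cone vertex yields the required homeomorphism of pairs. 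Transversality of $\partial H$ to $V$ in $M$ holds at each $x \in \partial V$ because $T_x \partial H$ contains $\nabla f(x)$, which spans the direction normal to $T_x M_0 \supseteq T_x V$.

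The main, and indeed only, subtlety is continuity at the cone vertex, i.e., that $\delta^{-\lambda(v+w)}_t(y) \to x_0$ uniformly in $y \in \partial H$ as $t \to \infty$. This is arranged by the choice of damping function $\lambda$ with all derivatives vanishing at the origin, exactly as in the proof of Lemma~\ref{le:cylinder}. Since the rest of the argument is a straightforward restriction of a construction that has already been carried out in detail, there is no new obstacle beyond bookkeeping.
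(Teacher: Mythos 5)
Your proposal is correct and matches the paper's own argument: the paper likewise extracts from the cone structure of $V$ a nowhere zero vector field $v$ on $V\setminus\{x_0\}$ outward normal over $\partial V$, runs the construction of Proposition~\ref{prop:6} to obtain $H$ and the field $v+w$, and observes that $v+w$ restricts to $v$ over $V$ (since $w=f\cdot\nabla f$ vanishes on $M_0$), so the flow exhibits $(H, H\cap V)$ as a cone neighborhood pair. Your added remarks on transversality of $\partial H$ to $V$ and on continuity at the vertex are consistent with what the paper already established in the proof of Theorem~\ref{th:1}.
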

\begin{proof} Since $V$ is a cone over its boundary with vertex at $x_0$, the complement $V\setminus \{x_0\}$ is diffeomorphic to $V\times (-\infty, 0]$. Consequently, there is a nowhere zero vector field $v$ on $V\setminus \{x_0\}$ that is outward normal over $\partial V$. By an argument as in the proof of Proposition~\ref{prop:6}, we conclude that the critical point $x_0$ admits a cylindrical ball neighborhood $H$. In fact, the construction in the proof of Proposition~\ref{prop:6} also produces a nowhere zero vector field $w+v$ over $H\setminus\{0\}$ that restricts to $v$ over $V$. Consequently, the pair $(H, H\cap V)$ is a cone neighborhood pair for $x_0$. 
\end{proof}

\section{Algebraic singular points}\label{s:4alg}

In this section we make precise the statement that the set of Taylor series of map germs with non cone-like  critical points is of infinite codimension in the space of all Taylor series. In subsection~\ref{s:tour} we  recall the Tourgeron theorem on finitely determined map germs and deduce that map germs with non cone-like critical points are extremely rare, see Corollary~\ref{c:t17}. In subsection~\ref{s:inf} we will discuss infinitely determined map germs, which is a larger class than the class of finitely determined map germs, and show that if a singular map germ is infinitely determined, then its critical point is cone-like, see Proposition~\ref{p:inf20}.

\subsection{Finitely determined map germs}\label{s:tour}
Let $\mathcal{E}(n, p)$ denote the vector space of map germs at $0$ of smooth functions  $\R^n\to \R^p$. The vector space $\mathcal{E}(n)=\mathcal{E}(n, 1)$ of function germs is a ring with multiplication and addition induced by multiplication and addition of functions. 
Let $\mathcal{B}(n)$ denote the subset of invertible map germs $f$ in $\mathcal{E}(n,n)$ with an additional condition $f(0)=0$. The set $\mathcal{B}(n)$ is a group with operation given by the composition. 
We say that map germs $f_0, f_1\in \mathcal{E}(n, p)$ are \emph{right equivalent} if there is a map germ $h\in \mathcal{B}(n)$ such that $f_0\circ h=f_1$. For a non-negative integer $k$, the \emph{$k$-jet} $\hat{f}$ of a map germ $f$ at $0$ is the Taylor polynomial of $f$ at $0$ of order $k$. Similarly, for $k=\infty$, the $k$-jet of $f$ is the Taylor series of $f$ at $0$. The vector space of all $k$-jets at $0$ is denoted by $\hat{\mathcal{E}}_k(n, p)$. The vector space of Taylor series of map germs $\R^n\to \R^p$ at $0$ is called the (infinite) \emph{jet space}. It is denoted by $\hat{\mathcal{E}}(n, p)\co =\hat{\mathcal{E}}_\infty(n, p)$. There is a sequence of projections of Euclidean spaces
\[
   \hat{\mathcal{E}}(n, p)\to \cdots \to \hat{\mathcal{E}}_{k+1}(n, p)\to \hat{\mathcal{E}}_k(n, p)\to \cdots 
\]
that truncate the series/polynomials.  The truncation $\hat{\mathcal{E}}(n, p)\to \hat{\mathcal{E}}_k(n, p)$ is denoted by $\pi_k$. We say that a subset $X$ in $\hat{\mathcal{E}}(n, p)$ is \emph{proalgebraic} if it is of the form $\cap \pi^{-1}_k X_k$ for some algebraic sets $X_k\subset \hat{\mathcal{E}}_k(n, p)$. By definition, the \emph{codimension} of $X$ is the upper limit of codimensions of the algebraic subsets $X_k$. 

A map germ $f$ is said to be \emph{$k$-definite} if every map germ $g$ with the same $k$-jet $\hat{g}=\hat{f}$ is right equivalent to $f$. For example, a function germ $f\co \R^n\to \R$ is $1$-determined at $0$ if and only if it is non-singular at $0$, i.e., $df(0)\ne 0$. A function germ $f\co \R^n\to \R$ is $2$-determined at a critical point $0$ if and only if $0$ is a Morse critical point. 
We say that a map germ $f\co (\R^n, 0)\to (\R^p, 0)$ is \emph{finite} if $\mathcal{E}(n)/\langle f_1,..., f_p\rangle$ is finite dimensional, where $\langle f_1,..., f_p\rangle$ is the ideal generated by the components $f_1,..., f_p$ of $f$. We note that the condition 
$\mathcal{E}(n)/\langle f_1,..., f_p\rangle=k$ implies \[
\dim \mathcal{E}(n)/(\langle f_1,..., f_p\rangle + m(n)^{k+1})=\dim \hat{\mathcal{E}}_k(n)/\langle \hat{f}_1,..., \hat{f}_p\rangle \le k,
\]
where $m(n)$ is the maximal ideal in $\mathcal{E}(n)$. The converse is also true, i.e., 
if we have $\dim \hat{\mathcal{E}}_k(n)/\langle \hat{f}_1,..., \hat{f}_p\rangle \le k$ for some $k$, then $f$ is finite. 
It is known that if $df\co \R^n\to \R$ is a finite map germ at $0$, then the map germ $f\co \R^n\to \R$ at $0$ is finitely determined, e.g., see \cite[\S11.10]{BL}.

On the other hand, let $Y$ denote the set of Taylor series of non-finite map germs. Then 
$
    Y=\cap \pi_k^{-1}Y_k
$
for the sets
\[
Y_k=\{\hat{g}\in \hat{\mathcal{E}}_k(n, p)\ |\  \dim \hat{\mathcal{E}}_k(n)/\langle \hat{g}_1,..., \hat{g}_p\rangle >k\},
\]
where $\hat{g}_1,..., \hat{g}_p$ are components of $\hat{g}$. 

\begin{theorem}[Tourgeron Theorem] The sets $Y_k$ are algebraic. In fact, if $n\le p$, then $Y$ is a proalgebraic set of infinite codimension. 
\end{theorem}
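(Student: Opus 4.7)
My starting point is that the ideal $\langle \hat g_1,\ldots,\hat g_p\rangle\subset \hat{\mathcal{E}}_k(n)$ is the image of the $\R$-linear multiplication map
\[
\mu_{\hat g}\co \hat{\mathcal{E}}_k(n)^{p}\to \hat{\mathcal{E}}_k(n),\qquad (\alpha_1,\ldots,\alpha_p)\mapsto \textstyle\sum_i \alpha_i\hat g_i.
\]
In monomial coordinates the matrix of $\mu_{\hat g}$ has entries that are polynomial (in fact linear) in the coefficients of $\hat g$. Setting $N_k=\dim \hat{\mathcal{E}}_k(n)$, the dimension of the quotient equals $N_k-\mathrm{rank}(\mu_{\hat g})$, so $\hat g\in Y_k$ if and only if $\mathrm{rank}(\mu_{\hat g})<N_k-k$; this is the simultaneous vanishing of all $(N_k-k)$-minors of $\mu_{\hat g}$, an algebraic condition, and hence $Y_k$ is algebraic.

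\textbf{Infinite codimension when $n\le p$.} The plan is to prove $\mathrm{codim}(Y_k)\to\infty$ by a stratification of $\hat{\mathcal{E}}_k(n,p)$ organized by the rank $r$ of $d\hat g(0)$, the locus of rank exactly $r$ having codimension $(n-r)(p-r)$. On the open stratum (rank $n$, non-empty precisely because $n\le p$), some $n$ of the $\hat g_i$ have linearly independent differentials at $0$, so by the implicit function theorem they form a regular system of parameters; hence $\langle\hat g_1,\ldots,\hat g_p\rangle\supseteq m(n)$, the quotient collapses to $\R$, and this stratum is disjoint from $Y_k$. On a deeper stratum, a polynomial change of coordinates in the source (a formal diffeomorphism on the $k$-jet level, and hence algebraic on $\hat{\mathcal{E}}_k(n,p)$) together with a $\mathrm{GL}(p,\R)$-relabeling of components normalizes $\hat g_i = x_i$ for $i\le r$, giving
\[
\hat{\mathcal{E}}_k(n)/\langle\hat g_1,\ldots,\hat g_p\rangle\;\cong\; \hat{\mathcal{E}}_k(n-r)/\langle\tilde g_{r+1},\ldots,\tilde g_p\rangle,
\]
where $\tilde g_i$ is the restriction of $\hat g_i$ to $\{x_1=\cdots=x_r=0\}$. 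This reduces matters to map germs $(\R^{n-r},0)\to(\R^{p-r},0)$ with vanishing differential, still in the regime $n-r\le p-r$.

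\textbf{Higher-order iteration and conclusion.} In the reduced case $d\hat g(0)=0$, I would compare $\langle\hat g_1,\ldots,\hat g_p\rangle$ with the graded ideal $(Q_1,\ldots,Q_p)\subset\R[x_1,\ldots,x_n]$ generated by the quadratic parts of the $\hat g_i$. A graded Nakayama argument yields that if $(Q_1,\ldots,Q_p)$ cuts out a zero-dimensional scheme of length $d$ (and a generic $p$-tuple with $p\ge n$ gives $d\le 2^n$), then $\langle \hat g_1,\ldots,\hat g_p\rangle$ has codimension at most $d$ in $\mathcal{E}(n)$ uniformly in $k$, so $\hat g\notin Y_k$ as soon as $k>d$. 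Therefore, for large $k$, membership in $Y_k$ forces the $Q_i$ to lie in a proper algebraic subvariety of the space of quadratic jets. Applying the same rank-stratification-and-normalization procedure one order higher, and iterating, produces a nested family of algebraic conditions whose codimensions accumulate with $k$; combining these with the $(n-r)(p-r)$ contributions from the rank strata yields $\mathrm{codim}(Y_k)\to\infty$, which is exactly the statement that $Y$ is proalgebraic of infinite codimension.

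\textbf{Main obstacle.} The hardest step will be the quantitative bookkeeping of codimension increments at each stage of the recursion and verifying that they accumulate without saturation as $k\to\infty$. The underlying mechanism is the Mather--Tourgeron finite determinacy theorem -- finiteness of the contact codimension at some finite jet level already forces finite determinacy and hence finiteness of the map germ -- and the delicate technical point is transferring this ``at some finite level'' statement back through the inverse system of jet truncations to obtain a codimension lower bound that diverges uniformly in $k$.
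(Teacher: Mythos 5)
The paper does not actually prove this theorem: it is quoted from the literature, with the proof delegated to Br\"ocker--Lander \cite[Theorem 13.4]{BL} (the result is Tougeron's). So your proposal is measured against the textbook proof rather than against an argument in the paper. Your first step is correct and is in fact the standard argument: the ideal $\langle\hat g_1,\dots,\hat g_p\rangle$ is the image of the linear map $\mu_{\hat g}$, whose matrix depends linearly on the coefficients of $\hat g$, so the condition $\dim\hat{\mathcal{E}}_k(n)/\langle\hat g_1,\dots,\hat g_p\rangle>k$ is the simultaneous vanishing of all $(N_k-k)$-minors, and $Y_k$ is algebraic. (A small point: on the open stratum you implicitly assume $\hat g_i(0)=0$; if some $\hat g_i(0)\neq0$ the ideal is the unit ideal and the conclusion is the same.)

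The second half, however, has a genuine gap, and it sits exactly where the content of Tougeron's theorem lies. The rank stratification of the linear part contributes only the bounded codimension $(n-r)(p-r)\le np$, so the divergence of $\mathrm{codim}\,Y_k$ must come entirely from the claim that, for large $k$, membership in $Y_k$ imposes order by order algebraic conditions of positive codimension on the degree-$2$, degree-$3$, \dots\ parts, and that these increments accumulate without saturating. You state this as a plan and then flag it yourself as ``the hardest step''; it is not carried out, and it is not routine. In particular, ``applying the same rank-stratification-and-normalization procedure one order higher'' has no analogue of the clean $\mathrm{GL}_n\times\mathrm{GL}_p$-orbit structure available at order one: degenerate tuples of quadratic and higher forms admit no finite list of normal forms, so the inductive step needs a different mechanism (Tougeron's actual proof proceeds by a direct dimension count at the jet level rather than by iterated normal forms). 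Your closing paragraph also conflates two different codimensions: the colength of the ideal $\langle f_1,\dots,f_p\rangle$ in $\mathcal{E}(n)$, which is a property of a single germ and is what determinacy theorems control, and the codimension of the algebraic set $Y_k$ inside the jet space $\hat{\mathcal{E}}_k(n,p)$, which is a property of the whole family and is what the theorem asserts; the Mather--Tougeron determinacy theorem does not by itself yield a divergent lower bound on the latter. As it stands, the proposal proves the first sentence of the theorem and outlines, but does not prove, the second.
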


\begin{corollary} The set of jets of map germs $f\in \mathcal{E}(n)$ for which $df$ is not finite is a subset of infinite codimension. Thus, the set of jets of non-finitely determined map germs is a subset of a proalgebraic set of infinite codimension. 
\end{corollary}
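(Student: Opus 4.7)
The plan is to apply the Tourgeron Theorem to the differential $df\co \R^n\to\R^n$ and transfer the resulting estimate back to function germs via the linear differentiation map. Let $D\co \hat{\mathcal{E}}(n,1)\to \hat{\mathcal{E}}(n,n)$ be the $\R$-linear map sending $\hat f$ to its formal differential $\widehat{df}=(\partial_1\hat f,\ldots,\partial_n\hat f)$; at each finite level this restricts to a linear map $D_k\co \hat{\mathcal{E}}_{k+1}(n,1)\to \hat{\mathcal{E}}_k(n,n)$ with one-dimensional kernel, namely the constants. By the characterization of finite map germs recalled in the text, $df$ fails to be finite at $0$ exactly when $\mu_k(\hat f)\co =\dim \hat{\mathcal{E}}_k(n)/\langle \partial_1\hat f,\ldots,\partial_n\hat f\rangle>k$ for every $k$, equivalently, when $D(\hat f)$ lies in the Tourgeron proalgebraic set $Y\subset \hat{\mathcal{E}}(n,n)$ obtained by taking $p=n$.

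The proalgebraic structure then comes for free: since $Y_k$ is algebraic in $\hat{\mathcal{E}}_k(n,n)$ and $D_k$ is linear, $Z_k\co =D_k^{-1}(Y_k)$ is algebraic in $\hat{\mathcal{E}}_{k+1}(n,1)$, and the set of jets $\hat f$ for which $df$ is not finite equals $\bigcap_k \pi_{k+1}^{-1}(Z_k)$, a proalgebraic subset of $\hat{\mathcal{E}}(n,1)$. The second sentence of the corollary will then follow at once from the first: by the criterion cited before the Tourgeron Theorem, if $df$ is finite then $f$ is finitely determined, so every non-finitely determined germ has $df$ not finite and its Taylor series lies in this proalgebraic set.

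The substantive content is to show that $\mathrm{codim}(Z_k,\hat{\mathcal{E}}_{k+1}(n,1))\to\infty$. Because $\ker D_k=\R$, this codimension equals $\mathrm{codim}(Y_k\cap \mathrm{im}(D_k),\mathrm{im}(D_k))$, and the codimension bound Tourgeron provides in the larger ambient space $\hat{\mathcal{E}}_k(n,n)$ does not automatically transfer to the subspace of formal gradients. To handle this I would argue directly that membership of $\hat f$ in $Z_k$ forces the Milnor number $\mu(\hat f)=\dim \mathcal{E}(n)/\langle \partial_i\hat f\rangle$ at the origin to exceed $k$, since $\mu\ge \mu_k$. Thus $Z_k$ sits inside the locus of $(k+1)$-jets whose Milnor number at the origin is greater than $k$, and the classical Arnold--Thom--Boardman stratification shows that the codimension of this locus in jet space grows without bound in $k$: increasingly degenerate critical points (rising corank of the Hessian, then successively deeper normal-form conditions on the cubic and higher-order terms) cut out nested algebraic strata whose codimensions accumulate to infinity.

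I expect the hard part to be precisely this codimension estimate. The Tourgeron Theorem in the ambient jet space is not by itself sufficient, because the gradient subspace $\mathrm{im}(D_k)$ could in principle sit inside $Y_k$ in a codimension-collapsing way; one must supplement the argument either by adapting Tourgeron's Hilbert-function proof to the subspace of closed jets, or, as sketched above, by invoking Arnold's codimension tables for the $\mu$-stratification of function germs. Everything else in the plan is formal bookkeeping.
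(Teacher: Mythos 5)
Your overall route is the intended one: the paper offers no independent argument for this corollary but simply cites \cite[Remark 13.6]{BL}, and that remark proceeds exactly as you do --- apply Tougeron's theorem with $p=n$ to the jet of $df$ and pull back along the linear map $D_k\co \hat{\mathcal{E}}_{k+1}(n,1)\to \hat{\mathcal{E}}_k(n,n)$. Your formal bookkeeping (the characterization of non-finiteness of $df$ via $\dim\hat{\mathcal{E}}_k(n)/\langle\partial_1\hat f,\dots,\partial_n\hat f\rangle>k$ for all $k$, the algebraicity of $Z_k=D_k^{-1}(Y_k)$, and the deduction of the second sentence from ``$df$ finite $\Rightarrow$ $f$ finitely determined'') is correct and matches the paper's setup. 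You also deserve credit for spotting the one non-formal point: codimension is not preserved under preimage by a non-surjective linear map, so the bound $\mathrm{codim}\,Y_k\to\infty$ in $\hat{\mathcal{E}}_k(n,n)$ does not by itself control $\mathrm{codim}\,Z_k$ in $\hat{\mathcal{E}}_{k+1}(n,1)$. This is precisely the issue that Remark 13.6 of \cite{BL} exists to settle.

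The gap is that you do not actually settle it. Of your two proposed fixes, the second is circular: the assertion that the locus of $(k+1)$-jets with Milnor number $>k$ has codimension tending to infinity \emph{is} the first sentence of the corollary (up to the trivial inequality $\mu\ge\mu_k$), and the ``Arnold codimension tables'' you invoke give codimensions of particular finite-codimension classes, not the needed statement that their complement has infinite codimension --- that statement is itself proved in the literature by the very Hilbert-function/Tougeron argument you are trying to avoid redoing. Your first option (``adapt Tougeron's proof to the subspace of gradients'') is the right one, but it is left entirely unexecuted; the whole substantive content of the corollary lives in that adaptation, namely in verifying that the colength condition $\dim\hat{\mathcal{E}}_k(n)/\langle\partial_1\hat f,\dots,\partial_n\hat f\rangle>k$ already imposes a number of independent algebraic conditions growing with $k$ \emph{on the} $(k+1)$-\emph{jet of} $f$ \emph{itself}, not merely on an arbitrary $n$-tuple of $k$-jets. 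As written, the proposal reduces the corollary to exactly the estimate it set out to prove; to close it you should either carry out the Hilbert-function count on $\mathrm{im}(D_k)$ or explicitly cite \cite[Remark 13.6]{BL}, which is what the paper does.
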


For proofs of the Tourgeron Theorem and its corollary we refer to \cite[Theorem 13.4 and Remark 13.6]{BL}  respectively. 

We note that a $k$-determined map germ $f$ is right equivalent to a polynomial $g$ of degree $k$. Since the critical point of a polynomial $g$ is algebraic, we deduce that the critical point of any finitely determined map germ $f$ is cone-like. 

\begin{corollary} \label{c:t17} The set of jets of map germs with non cone-like critical points is a subset of a proalgebraic set of infinite codimension. 
\end{corollary}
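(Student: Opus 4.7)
The plan is to combine the preceding corollary with the observation made in the paragraph immediately preceding the statement. The preceding corollary asserts that the set of jets of non-finitely determined map germs in $\mathcal{E}(n)$ is a subset of a proalgebraic set of infinite codimension. So it suffices to prove the contrapositive of the desired containment: every finitely determined map germ $f \in \mathcal{E}(n)$ has a cone-like critical point at the origin.

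For this, I would recall the observation already made in the excerpt: if $f$ is $k$-determined, then $f$ is right equivalent to a polynomial $g$ of degree $k$ (for instance, one may take $g$ to be the $k$-th Taylor polynomial of $f$). Since $g$ is a polynomial, the origin is an algebraic critical point of $g$ in the sense of the introduction, so by the Milnor-style result recalled there the origin admits a cone neighborhood pair with respect to $g$, and in particular it is cone-like.

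The remaining step is to observe that cone-likeness is invariant under right equivalence. A right equivalence is a germ of diffeomorphism $h \in \mathcal{B}(n)$ with $f = g \circ h$, so $h$ carries $f^{-1}(0)$ diffeomorphically onto $g^{-1}(0)$ and takes any cone neighborhood of the origin in $g^{-1}(0)$ back to a cone neighborhood of the origin in $f^{-1}(0)$. Since $g$ is cone-like at the origin, so is $f$. Stringing the steps together, any map germ with a non cone-like critical point is non-finitely determined, and hence its jet lies in the proalgebraic set of infinite codimension supplied by the previous corollary.

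I do not foresee a substantive obstacle: all the ingredients are in place in the excerpt, and the verification of right-equivalence invariance of cone-likeness is essentially bookkeeping, since right equivalences are germs of diffeomorphisms that carry singular hypersurfaces to singular hypersurfaces and cone neighborhoods to cone neighborhoods.
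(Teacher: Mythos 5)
Your argument is correct and is essentially the paper's own: the paper likewise deduces the corollary from the previous one by noting that a finitely determined germ is right equivalent to a polynomial, whose critical point is algebraic and hence cone-like. Your explicit check that cone-likeness is preserved under right equivalence is a detail the paper leaves implicit, but it is the same route.
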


\subsection{Infinitely determined map germs}\label{s:inf}
Next, we turn to infinitely determined map germs $f\co \R^n\to \R^p$ at $0$. 
We say that a map germ $f$ at $0$ is \emph{infinitely determined} if any map germ $g$ at $0$ that has the same Taylor series as $f$ is right equivalent to $f$.

Let $J_f$ denote the ideal in $\mathcal{E}(n)$ generated by $p\times p$ minors of the differential $d_0f$ of $f$ at $0$, where $f$ is a map germ at $0$ of a function $\R^n\to \R^p$. We say that a finitely generated ideal $I$ in $\mathcal{E}(n)$ is \emph{elliptic} if it contains $m_n^\infty$, where $m_n$ is the maximal ideal in $\mathcal{E}(n)$. Part of Theorem~\ref{th:8} was announced in \cite{Ku77} and for the proof, we refer to \cite{Br81}, \cite{N77}, and \cite{Wi81}, see also \cite[Theorem 6.1 and Lemma 6.2]{Wa81}.

\begin{theorem}\label{th:8} The following conditions are equivalent:
\begin{itemize}
\item A map germ $f$ is infinitely determined.
\item $J_f$ is elliptic.
\item $||d_xf||^2\ge C||x||^\alpha$ holds on some neighborhood of $0$ for some constants $C$ and $\alpha>0$, where  $||d_xf||^2$ is the sum of squares of $p\times p$ minors of $d_xf$. 
 \end{itemize} 
\end{theorem}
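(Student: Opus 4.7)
The plan is to establish the two equivalences $(2)\Leftrightarrow(3)$ and $(1)\Leftrightarrow(2)$ separately. The first is a Lojasiewicz-type reformulation internal to the ring $\mathcal{E}(n)$, while the second is a homotopy-integration argument that forms the analytic heart of the theorem.

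For $(3)\Rightarrow(2)$ I would use smooth division. Let $m_1,\dots,m_N$ denote the $p\times p$ minors of $d_xf$, so that $\|d_xf\|^2=\sum m_i^2$. For any flat germ $h\in m_n^\infty$, the identity $h=\sum_i m_i\cdot\bigl(h\cdot m_i/\|d_xf\|^2\bigr)$ exhibits $h$ as an element of $J_f$ provided each quotient $h\cdot m_i/\|d_xf\|^2$ extends smoothly across $0$, and this extension is a Hestenes--Malgrange consequence of the hypothesis $\|d_xf\|^2\geq C\|x\|^\alpha$ together with the fact that $h$ and all its derivatives vanish faster than any polynomial at $0$. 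For the converse $(2)\Rightarrow(3)$ I would invoke Lojasiewicz's inequality for finitely generated ideals: the flat functions contained in $J_f$ vanish only at the origin, so the common zero locus of the minors $m_1,\dots,m_N$ reduces to $\{0\}$, and Lojasiewicz then furnishes $\sum m_i^2\geq C\|x\|^\alpha$ on a neighborhood of $0$.

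For $(2)\Rightarrow(1)$, the analytic heart of the theorem, suppose $f_0=f$ and $f_1$ have identical Taylor series. I would interpolate by $f_t=(1-t)f_0+tf_1$, so that $\dot f:=\partial_t f_t=f_1-f_0$ is flat at $0$, and seek a time-dependent germ of vector field $X_t$, with $X_t(0)=0$, satisfying the homological equation $d_xf_t\cdot X_t(x)=-\dot f(x)$. Integrating $X_t$ from $t=0$ to $t=1$ then supplies $h\in\mathcal{B}(n)$ with $f_1\circ h=f_0$. Since each component of $\dot f$ is flat and $J_{f_t}$ is elliptic uniformly in $t$, the smooth division from the previous paragraph --- applied via Cramer's rule on the $p\times p$ submatrices of $d_xf_t$ --- produces $X_t$ as a smooth family of vector fields vanishing at $0$, and its smallness near the origin guarantees that the flow is defined on a uniform neighborhood.

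Finally, $(1)\Rightarrow(2)$ is contrapositive: if $J_f$ fails to be elliptic, then some flat germ $h$ avoids $J_f$, and the perturbation $f+h$ cannot be right-equivalent to $f$, since any such equivalence would, to first order in a coordinate-change vector field $\xi$, realize $h$ as $d_xf\cdot\xi$ and force its components into $J_f$ via the same Cramer's rule argument. The principal obstacle throughout is the uniform smooth division in $(2)\Rightarrow(1)$: the ellipticity estimate $\|d_xf\|^2\geq C\|x\|^\alpha$ must be seen to persist under the deformation $f_t$ --- which holds because flat corrections do not alter the leading behavior of the minors --- and the solution $X_t$ must depend smoothly on $t$, so that the resulting flow yields a genuine element of $\mathcal{B}(n)$ rather than a merely formal change of coordinates.
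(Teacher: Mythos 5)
The paper offers no proof of Theorem~\ref{th:8} at all --- it refers the reader to Brodersen, Nguy\^{e}n et al., Wilson, and Wall --- so there is no internal argument to compare against. Your sketch of $(3)\Rightarrow(2)$ by smooth division of a flat germ by $\|d_xf\|^2$, and of $(2)\Rightarrow(1)$ by the homotopy method (solving $d_xf_t\cdot X_t=-\dot f$ via Cramer's rule and integrating), does follow the standard route of those references and is sound in outline, modulo the usual care with derivative estimates for $1/\|d_xf\|^2$ and with the existence of the flow.

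Two of your implications, however, have genuine gaps. In $(2)\Rightarrow(3)$ you derive the estimate $\sum m_i^2\ge C\|x\|^\alpha$ from the fact that the common zero locus of the minors is $\{0\}$ ``by Lojasiewicz.'' The classical Lojasiewicz inequality is a theorem about real-analytic functions; for merely smooth germs it is false that a function vanishing only at the origin admits a polynomial lower bound (consider $e^{-1/\|x\|^2}$), and the minors of $d_xf$ are only smooth. The correct argument (Tougeron's) must use the full hypothesis $m_n^\infty\subset J_f$: assuming the estimate fails along a sequence $x_j\to 0$ with $\max_i|m_i(x_j)|\le \|x_j\|^j$, one constructs a flat germ $h$ whose values at the $x_j$ decay strictly more slowly, and the relation $h=\sum a_im_i$ then gives a contradiction; without this construction the implication is unproved. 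In $(1)\Rightarrow(2)$, the claim that a right equivalence between $f$ and $f+h$ ``to first order'' realizes $h$ as $d_xf\cdot\xi$ and hence forces $h\in J_f$ is only an infinitesimal heuristic: a genuine equivalence $f\circ\phi=f+h$ gives, by the Hadamard formula, $h(x)=\bigl(\int_0^1 d_{x+s(\phi(x)-x)}f\,ds\bigr)\cdot(\phi(x)-x)$, which places $h$ in an ideal generated by derivatives of $f$ along a path, not in $J_f$. The published proofs instead argue the contrapositive of $(3)$: from the failure of the Lojasiewicz estimate one extracts a sparse sequence where the minors are super-polynomially small and builds a flat perturbation supported near it that alters an invariant of right equivalence (for instance, it creates critical points where $f$ has none). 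That construction is the missing content of your last step.
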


By Theorem~\ref{th:8}, if $f\co \R^n\to \R$ is not an infinitely determined function germ at $0$, then there is a sequence of points $z_1, z_2, ....$ converging to $0$ such that 	
\[
\left(\frac{\partial f}{\partial x_1}(z_i)\right)^2 + \cdots +\left(\frac{\partial f}{\partial x_n}(z_i)\right)^2=o||z_i||^N
\]
 for all $N$.

\begin{example} The map germ $f(x, y)=(x^2+y^2)^2$ is not finitely determined \cite[p. 237]{Wi82}, but it is infinitely determined. The map germ $g(x, y)=(x, y^4+xy^2)$ is not infinitely determined.  
\end{example}

\begin{proposition}\label{p:inf20} If a singular map germ is infinitely determined, then its critical point is cone-like. 
\end{proposition}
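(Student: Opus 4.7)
The plan is to verify Rothe's Hypothesis $\mathcal{H}$ (adapted to the map setting: $x-x_0$ does not lie in the span of $\nabla f_1(x),\ldots,\nabla f_p(x)$ for $x\ne x_0$ in an isolating neighborhood of $x_0$ with $f(x)=f(x_0)$), and then invoke the machinery of Section~\ref{s:3C4}. Once Hypothesis $\mathcal{H}$ is known, the argument of Corollary~\ref{c:8} identifies $V=f^{-1}(f(x_0))$ near $x_0$ with a cone on its boundary, and this is exactly the definition of cone-like. So the task reduces to verifying Hypothesis $\mathcal{H}$ for every infinitely determined isolated singularity.

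After translating so that $x_0=0$ and $f(0)=0$, I would begin from the bound $\|d_xf\|^2\ge C\|x\|^\alpha$ supplied by Theorem~\ref{th:8}, which in particular makes $0$ isolated. Arguing by contradiction, suppose that on every neighborhood of $0$ there is a point $x\in V\setminus\{0\}$ with $x$ lying in the span of $\nabla f_1(x),\ldots,\nabla f_p(x)$, so that the set
\[
\Sigma=\{x\in U\ |\ f(x)=0,\ \mathrm{rank}\,[x,\nabla f_1(x),\ldots,\nabla f_p(x)]\le p\}
\]
accumulates at $0$. A curve selection argument would then produce a continuous arc $\gamma\co[0,\delta)\to\Sigma$, differentiable on $(0,\delta)$, with $\gamma(0)=0$ and $\gamma(t)\ne 0$ for $t>0$, together with scalar functions $\lambda_1(t),\ldots,\lambda_p(t)$ satisfying $\gamma(t)=\sum_i\lambda_i(t)\nabla f_i(\gamma(t))$. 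Differentiating $f_i\circ\gamma\equiv 0$ gives $\langle\nabla f_i(\gamma(t)),\gamma'(t)\rangle=0$ for each $i$, and hence
\[
\tfrac12\frac{d}{dt}\|\gamma(t)\|^2=\langle\gamma(t),\gamma'(t)\rangle=\sum_i\lambda_i(t)\langle\nabla f_i(\gamma(t)),\gamma'(t)\rangle=0.
\]
So $\|\gamma\|$ is constant on $(0,\delta)$, contradicting $\gamma(0)=0$ and $\gamma(t)\ne 0$ for $t>0$.

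The main obstacle is the curve selection step. Milnor's classical Curve Selection Lemma applies to semi-algebraic or semi-analytic sets, whereas $\Sigma$ is \emph{a priori} only the zero locus of finitely many smooth equations built from $f$ and its partial derivatives. Infinite determinacy must enter here in a substantive way. The natural route is to invoke the ellipticity of the Jacobian ideal $J_f$ from Theorem~\ref{th:8}: the inclusion $J_f\supset m_n^N$ for some $N$ implies that, within its right-equivalence class, $f$ may be chosen so that its partial derivatives agree with those of a polynomial model up to a remainder that is flat enough to be dominated by the \L{}ojasiewicz bound. This would allow $\Sigma$ to be replaced by a semi-algebraic set with the same accumulation behavior at $0$, to which the classical lemma applies. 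An alternative route is to work in a subanalytic or $o$-minimal framework in which the \L{}ojasiewicz inequality places $f$ in a tame class admitting curve selection directly. In either case, the quantitative rigidity provided by infinite determinacy is essential: without it, $\Sigma$ could in principle spiral into $0$ in a way that no purely topological argument could rule out.
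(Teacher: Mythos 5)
Your reduction to Rothe's Hypothesis $\mathcal{H}$ and the Milnor-style computation $\frac{d}{dt}\|\gamma(t)\|^2=0$ are fine as far as they go, but the proof has a genuine gap exactly where you flag it: the curve selection step is never actually carried out, and the repair you sketch does not work as stated. You write that ellipticity of $J_f$ gives an inclusion $J_f\supset m_n^N$ for some finite $N$; this is a misreading of Theorem~\ref{th:8} --- ellipticity means $J_f\supset m_n^\infty=\cap_k m_n^k$, the ideal of flat germs, which is far weaker. (If $J_f$ contained a finite power of the maximal ideal, $f$ would essentially be finitely determined, and the example $f(x,y)=(x^2+y^2)^2$ from Section~\ref{s:inf} shows this fails.) Moreover, an infinitely determined germ is right equivalent only to germs with the same Taylor series, and that series need not be polynomial or even convergent, so there is no obvious ``polynomial model up to a flat remainder'' to which you could transfer the set $\Sigma$; and the \L{}ojasiewicz bound controls $\|d_xf\|$ from below, which says nothing about the rank condition defining $\Sigma$. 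So the substantive claim --- that infinitely determined germs satisfy $\mathcal{H}$ --- remains unproved.

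The paper's argument avoids all of this and is much shorter: infinite (right) determinacy implies infinite $\mathcal{K}$-determinacy, which by Brodersen's theorem implies \emph{finite} $\mathcal{K}$-determinacy; hence $f$ is $\mathcal{K}$-equivalent to a polynomial, and $\mathcal{K}$-equivalence supplies a diffeomorphism of source germs carrying $f^{-1}(0)$ to an algebraic set. The local cone structure of algebraic sets (Milnor) then makes the critical point cone-like. The lesson is that the passage from ``infinitely determined'' to ``tame enough for curve selection'' is precisely the content of a nontrivial theorem (Brodersen), and once you invoke it you might as well use it to land directly on an algebraic zero locus rather than to justify Hypothesis $\mathcal{H}$.
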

If $E$ is an equivalence relation on map germs, then we say that a map germ $f$ is $k$-$E$-determined if every map germ $g$ with the same $k$-jet $\hat{g}$ as the $k$-jet $\hat{f}$ of $f$ is $E$-equivalent to $f$. An important example of an $E$-equivalence is a $\mathcal{K}$-equivalence. If $f$ and $g$ are $\mathcal{K}$-equivalent, then there is a diffeomorphism of source spaces of $f$ and $g$ that takes $f^{-1}(0)$ to $g^{-1}(0)$.

\begin{proof} 
Suppose that $f$ is infinitely determined. Then it is infinitely $\mathcal{K}$-determined. Then, by the Brodersen theorem \cite{Br81}, it is finitely $\mathcal{K}$-determined. In particular, the map germ $f$ is $\mathcal{K}$-equivalent to a polynomial. Thus, there is a diffeomorphism of the source spaces of $f$ and $g$ that takes $f^{-1}(0)$ to an algebraic set $g^{-1}(0)$. 
\end{proof}

\iffalse
Let $\mathcal{G}$-equivalence stands for $\mathcal{R}$, $\mathcal{C}$, or $\mathcal{K}$-equivalence. 
Let $J_f$ denote the ideal in $\mathcal{E}_n$ generated by $p\times p$ minors of $df$. We say that a finitely generated ideal $I$ in $\mathcal{E}_n$ is \emph{elliptic} if it contains $m_n^\infty$, where $m_n$ is the maximal ideal in $\mathcal{E}_n$. 
Put
\[
     I_\mathcal{R}=J_f, \quad I_\mathcal{C}(f)=f^*m_p\cdot  \mathcal{E}_n, \quad I_\mathcal{K}(f)=I_\mathcal{R}(f)+I_\mathcal{C}(f). 
\]

Let $N_\mathcal{R}(f)(x)$ denote the sum of squares of $p\times p$ minors of $df(x)$. Define 
\[ 
   N_\mathcal{C}(f)(x)=||f(x)||^2, 
\]
and put $N_\mathcal{K}(f)(x)=N_\mathcal{C}(f)(x)+N_\mathcal{R}(f)(x)$.

\begin{theorem} The following conditions are equivalent:
\begin{itemize}
\item A map germ $f$ is infinitely $\mathcal{G}$-determined.
\item $I_\mathcal{G}(f)$ is elliptic.
\item $N_\mathcal{G}(f)(x)\ge C||x||^\alpha$ holds on some neighborhood of $0$ for some constants $C$ and $\alpha>0$. 
 \end{itemize} 
\end{theorem}

\fi

\section{The Takens functions}\label{s:5Tak}

In \cite{Ta68}, Takens constructed functions on manifolds of dimension $\ge 3$ with non cone-like critical points. In this section we study Takens functions, and show that the critical points of these functions still admit cylindrical neighborhoods diffeomorphic to a ball. 

Given a smooth manifold $X$ with corners and a smooth manifold $Y$ with smooth boundary, a map $f\co X\to Y$ as well as its inverse is said to be a \emph{special almost diffeomorphism} if it is continuous and restricts to a diffeomorphism $X\setminus \Sigma\to Y\setminus f(\Sigma)$ where $\Sigma$ is the set of points of the boundary of $X$ at which $\partial X$ is not smooth. An \emph{almost diffeomorphism} of manifolds with corners is a composition of special almost diffeomorphisms.

The Takens construction is based on the existence for $n\ge 4$ of a set $Q\subset \R^{n-1}\subset \R^n$ such that 

\begin{itemize}
\item there exists a continuous map $\tau\co \R^n\to \R^n$ with $\tau(Q)=0$ such that the restriction $\tau|\R^n\setminus Q$ is a diffeomorphism onto $\R^n\setminus \{0\}$, and
\item there is no closed neighborhood $U$ of $Q$ in $\R^{n-1}$ such that $U\setminus Q$ is homeomorphic with $\partial U\times (0, 1]$.  
\end{itemize}

When $n=4$, we may choose $Q\subset \R^3\subset \R^4$ to be the Fox-Artin arc, which is an embedded segment in $\R^3$ with non-trivial fundamental group $\pi_1(\R^3\setminus Q)$. When $n\ge 5$, the space $Q\subset \R^{n-1}\subset \R^n$ can be constructed by means of the Newman-Mazur compact contractible manifold $M$ of dimension $n-1$ with non-trivial $\pi_1(\partial M)$. 
The Newman-Mazur compact manifold $M$ has the properties that
\begin{itemize}
\item $M\times [0,1]$ is almost diffeomorphic to a round ball,
\item the double of $M$ is diffeomorphic to $S^n$, and \item the disc $D^{n-1}$ can be decomposed into a union $A_1\cup A_2$ of two manifolds with corners each of which is almost diffeomorphic to the Newman-Mazur manifold $M$. 
\end{itemize}
Namely, starting with a disc $B_0$ of dimension $n-1$, Takens defines a sequence of compact $(n-1)$-manifolds $B_i$, where $B_{i+1}$ is obtained from $B_i$ by removing from $B_i$ a collar  
neighborhood of $B_i$ as well as a subset diffeomorphic to $A_1$ so that $B_{i+1}$ is diffeomorphic to the boundary connected sum $B_i\#_b A_2$, which is almost diffeomorphic to $B_i\#_b M$. Then $Q=\cap B_i$. 
It follows that in all these cases $Q$ is a compact subset of $\R^{n-1}$. 

To construct a smooth function $f\co \R^n\to \R$ with an isolated non cone-like critical point $0$, we first construct a smooth function $F\co \R^n\to \R$ with $F^{-1}(0)=\R^{n-1}$ and $\partial F/\partial x_n\equiv 0$ precisely over $Q$. Then $f$ is defined by $F=f\circ \tau$. 

The function $F$ in \cite{Tak} is of the form 
\[
      F(x_1,..., x_n)=\lambda(x_1,..., x_{n-1})\cdot x_n + h(x_n), 
\]
where $\lambda(x_1,..., x_{n-1})$ and $h(x_n)$ are certain smooth functions such that $\lambda> 0$ over the complement to $Q$, $h'\ge 0$, and $h(0)=0$. Namely, let $\{Q_i\}$ be a basis of compact neighborhoods of $Q$ in $\R^{n-1}$ such that $Q_i$ is in the interior of $Q_{i-1}$. Let
$\{\lambda_i\}$ denote the partition of unity of $\R^{n-1}\setminus Q$ with support of $\lambda_1$ in the complement of $Q_2$, and support of all other functions $\lambda_i$ in $Q_{i-1}\setminus Q_{i+1}$. Then $\lambda=\sum a_i\lambda_i$, where 
the strictly positive constants $a_i$ are chosen so that 
$a_i$ is less than the minimum of $1$ and 
\[
  \frac{||x||^{2}} {2^{i}||(\lambda_i\cdot x_n)\tau^{-1}||_{C^i} }, 
\]
where $||x||^2=x_1^2+\cdots x_n^2$, and $||g||_{C^i}$ is the maximum of values of derivatives of $g$ of orders at most $i$ over the compact support of the smooth function $g$ restricted to the disc $||x||\le 1$. 
Therefore, the series $\sum (\lambda_i\cdot x_n)\circ \tau^{-1}$ converges to a smooth function $(\lambda\cdot x_n)\circ \tau^{-1}$ whose derivative of any order is obtained by term by term differentiating the series. 

Let $\{h_j\}$ denote a sequence of functions on $\R^1$ parametrized by $j=1,2, ...$ such that $h_j(x)=x$ if $|x|\ge 1/i$, $h_i(x)=0$ if $|x|<1/(i+1)$ and $h'_j(x)\ge 0$.  
As above, we define a smooth function $h(x_n)=\sum b_j h_j\circ x_n$, where the strictly positive constants $b_j$ are chosen so that $b_j$ is less than the minimum of $1$ and 
\[
   \frac{||x||^2}{2^j||h_j\circ x_n\circ \tau^{-1}||_{C^i}}. 
\]

We immediately deduce the following proposition. 

\begin{proposition}\label{p:23Tak}  The Takens function germs $f$ are flat, i.e., derivatives of all orders of $f$ at $0$ are trivial. 
\end{proposition}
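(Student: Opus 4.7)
The plan is to decompose $f$ as the convergent series
\[
f = \sum_i a_i\, g_i + \sum_j b_j\, \tilde g_j,
\]
where $g_i = (\lambda_i\cdot x_n)\circ \tau^{-1}$ and $\tilde g_j = (h_j\circ x_n)\circ \tau^{-1}$, both extended by $0$ at $0$, and to show that each individual $g_i$ and $\tilde g_j$ vanishes identically on an open ball about $0\in \R^n$. Once this is known, $D^\alpha g_i(0)=D^\alpha \tilde g_j(0)=0$ for every multi-index $\alpha$; combined with the $C^i$-norm bounds in the definitions of $a_i$ and $b_j$, which make the series and all of its derivatives converge uniformly on a neighborhood of $0$, term-by-term differentiation gives $D^\alpha f(0)=0$ for every $\alpha$, which is the desired flatness.

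The key input is a properness statement for $\tau$: for every open neighborhood $N$ of $Q$ in $\R^n$ there exists $r>0$ with $\tau^{-1}(B_r(0))\subset N$. I would derive this from the hypotheses $\tau(Q)=0$ and $\tau|\R^n\setminus Q$ a homeomorphism onto $\R^n\setminus\{0\}$: the set $K=\tau^{-1}(\overline{B_1(0)})\setminus N$ is compact and disjoint from $Q=\tau^{-1}(0)$, so $\tau(K)$ is a compact subset of $\R^n\setminus\{0\}$ bounded away from $0$, and any $r$ below this bound does the job.

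I would then apply the properness statement twice. First, with $N=\mathop\mathrm{int}(Q_{i+1})\times\R$, which is an open neighborhood of $Q$ on which $\lambda_i\equiv 0$ because $\mathop\mathrm{supp}(\lambda_i)\subset Q_{i-1}\setminus Q_{i+1}$; this produces $r_i>0$ with $g_i\equiv 0$ on $B_{r_i}(0)$. Second, with $N=\R^{n-1}\times(-\tfrac{1}{j+1},\tfrac{1}{j+1})$, which is an open neighborhood of $Q\subset\R^{n-1}$; this produces $r_j>0$ with $|\tau^{-1}(y)_n|<1/(j+1)$ for $y\in B_{r_j}(0)$, whence $\tilde g_j\equiv 0$ on $B_{r_j}(0)$ by the construction of $h_j$.

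The main obstacle I expect is the properness step, since it depends on a careful compactness argument using that $\tau$ is a continuous bijection on complements with $\tau(Q)=0$. Everything else is bookkeeping: once each term of the series is known to vanish on an open neighborhood of $0$, the $C^i$-norm estimates that govern $a_i$ and $b_j$ legitimize term-by-term differentiation and the flatness of $f$ at $0$ follows.
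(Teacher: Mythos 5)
Your approach is correct and is essentially the argument the paper leaves implicit (the paper derives the proposition ``immediately'' from the construction): each summand of $F\circ\tau^{-1}$ vanishes identically on a punctured ball about $0$, and the choice of the constants $a_i$, $b_j$ makes every differentiated series converge uniformly near $0$, so $D^\alpha f(0)=0$ for all $\alpha$. Indeed, the bound defining $a_i$ already presupposes that the support of $(\lambda_i\cdot x_n)\circ\tau^{-1}$ is bounded away from the origin, which is exactly what your properness lemma supplies.

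One step of your argument is circular as written: you take $K=\tau^{-1}(\overline{B_1(0)})\setminus N$ to be compact, but boundedness of $\tau^{-1}(\overline{B_1(0)})$ is precisely the properness of $\tau$, and it does not follow formally from the two listed properties ($\tau(Q)=0$ and $\tau|_{\R^n\setminus Q}$ a diffeomorphism onto $\R^n\setminus\{0\}$); a priori the diffeomorphism could carry a neighborhood of infinity into a punctured neighborhood of $0$. You need to invoke the actual Takens construction, in which $\tau$ is the identity outside a compact set (equivalently, extends to the one-point compactification $S^n$), so that $\tau$ is proper and the induced continuous bijection $\R^n/Q\to\R^n$ is a homeomorphism; with that in hand your compactness argument, and hence the whole proof, goes through. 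A second, minor, bookkeeping point: for the terms $b_j\,h_j\circ x_n\circ\tau^{-1}$ your chosen neighborhood $\R^{n-1}\times(-\tfrac{1}{j+1},\tfrac{1}{j+1})$ is unbounded, which is fine for your lemma as stated, but note that $h_j$ vanishes only for $|x_n|<1/(j+1)$, so you are using the vanishing of $h_j$, not of $\lambda$, on that set --- which is what you wrote, so no change is needed there.
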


\begin{proposition}\label{prop:21} The critical points of Takens functions admit cylindrical neighborhoods diffeomorphic to a ball. 
\end{proposition}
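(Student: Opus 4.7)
The strategy is to transfer the problem to the $F$-side via the Takens map $\tau\colon\R^n\to\R^n$, construct a cylindrical neighborhood of the critical set $Q$ for $F$ that is almost diffeomorphic to a round ball, and then push it forward by $\tau$ to a cylindrical neighborhood of the isolated critical point $0$ for $f$. The two structural inputs from Takens' construction that I would lean on are: every compact neighborhood $Q_i\subset\R^{n-1}$ of $Q$ in the chain is almost diffeomorphic to the Newman--Mazur manifold $M$ (respectively to a smooth $3$-disc neighborhood of the Fox--Artin arc when $n=4$), and $M\times[0,1]$ is almost diffeomorphic to a smooth $n$-ball with smoothed boundary the double $M\cup_{\partial M}M\cong S^{n-1}$.

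First I would fix one such neighborhood $V:=Q_i$ of $Q$ inside $F^{-1}(0)=\R^{n-1}$ together with a regular value $c>0$ of $F$ sufficiently small that $0$ is the only critical value in $[-c,c]$, and set
\[
H_F := \{(x,x_n)\in\R^n : x\in V,\ F(x,x_n)\in[-c,c]\}.
\]
Since $\partial F/\partial x_n=\lambda(x)+h'(x_n)$ is strictly positive everywhere on the level sets $F=\pm c$ (there $h'(x_n)>0$ because $x_n\neq 0$), the implicit function theorem realises the top and bottom faces of $H_F$ as smooth graphs $x_n=\beta(x)$ and $x_n=\alpha(x)$ over $V$, so $H_F$ is a smooth $n$-manifold with corners along $\partial V\times\{F=\pm c\}$. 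Topologically it is identified with $V\times[-c,c]\cong M\times[0,1]$, so by the Takens/Newman--Mazur property it is almost diffeomorphic after corner smoothing to a round $n$-ball with boundary $S^{n-1}$. Choosing the ambient Riemannian metric so that $\nabla F$ is tangent to the vertical boundary $\partial V\times\R$ makes $H_F$ a strict cylindrical neighborhood of $Q$ for $F$.

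Next, set $H_f:=\tau(H_F)\subset\R^n$ and endow the $f$-side with the $\tau$-pushforward of the $F$-side metric, so that $\tau$ carries $F$-gradient trajectories off $Q$ to $f$-gradient trajectories off $0$. Because $Q$ lies in the interior of $H_F$ and $\tau$ is a diffeomorphism off $Q$, the restriction $\tau\colon\partial H_F\to\partial H_f$ is a smooth diffeomorphism, so $\partial H_f$ inherits the same smooth corner structure as $\partial H_F$, and $H_f$ is a compact smooth submanifold of $\R^n$ with corners whose interior is an open neighborhood of $0=\tau(Q)$. By construction $H_f$ is a union of gradient trajectories of $f$ in $f^{-1}[-c,c]$, i.e.\ a cylindrical neighborhood.

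The main obstacle is the last step: identifying $H_f$ with a smooth standard $D^n$. Topologically $H_f=H_F/Q$ is the quotient of a topological ball by a compact contractible subspace (the Fox--Artin arc or the Newman--Mazur manifold), hence contractible with boundary $S^{n-1}$, and is therefore a topological $n$-ball. The smooth structure of $H_f$ is the ambient structure from $\R^n$ --- smooth boundary diffeomorphic to $S^{n-1}$ and interior open in $\R^n$ --- so $H_f$ is at any rate a smooth compact manifold with smooth boundary $S^{n-1}$; for $n\ge 6$ Smale's $h$-cobordism theorem then yields $H_f\cong D^n$ smoothly, and I expect the borderline dimensions $n=4,5$ (those flagged in the introduction) to require the most delicate dimension-specific input, either by exploiting Takens' almost-diffeomorphism $M\times[0,1]\to D^n$ more carefully or by accepting an almost-diffeomorphic version of the conclusion.
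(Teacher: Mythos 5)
Your overall strategy---transfer everything to the $F$-side, build a compact cylinder around $Q$, and push it forward by $\tau$---is also the paper's strategy, but your construction of the cylinder is genuinely different. You take the full rectangular cylinder over a Takens neighborhood $V=Q_i$ and repair the definition of ``cylindrical'' by choosing a metric whose gradient is tangent to the lateral wall $\partial V\times\R$; to identify the result with a ball you then need the Newman--Mazur input $M\times[0,1]\cong D^n$. Two caveats there: $Q_i$ is not almost diffeomorphic to $M$ but to an iterated boundary connected sum $B_0\#_b M\#_b\cdots\#_b M$, so what you actually need is $(\#_b^i M)\times I\cong D^n$ (true, since boundary connected sum commutes with $\times I$ up to almost diffeomorphism, but not what you quoted); and for $n=4$ a closed neighborhood of the Fox--Artin arc is a $3$-ball only because $Q$ is cellular, which deserves a sentence. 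The paper avoids all of this by keeping the given gradient and defining $H(r,\varepsilon)$ as the union of honest gradient trajectories meeting a round ball $B_r\supset Q$ (together with those limiting to $Q$); the work is then a lemma, proved by isotoping the gradient field to the vertical field away from $Q$, showing $H(r_3,\varepsilon)$ is diffeomorphic to the rectangular cylinder $C_\varepsilon(B_{r_3})\cong B_{r_3}\times[-\varepsilon,\varepsilon]$, which is a ball with corners with no Newman--Mazur input and uniformly in $n$. Your metric adjustment is defensible, but you should also address whether the pushed-forward metric (needed so that $\tau$ of an $F$-trajectory is an $f$-gradient trajectory) extends suitably across $0$.

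The genuine gap is the final smooth identification. The proposition must cover every $n\ge 4$, and what the paper actually establishes is that $H/Q$ is \emph{almost} diffeomorphic to a ball; your appeal to the $h$-cobordism theorem disposes of $n\ge 6$, but you explicitly leave $n=4,5$ open, and these are not ignorable edge cases---they are where the paper does real work. For $n=5$ it uses that $\partial(H/Q)$ is diffeomorphic to $\partial H$, hence almost diffeomorphic to a sphere, and invokes Milnor's Proposition C to conclude $H/Q$ is almost diffeomorphic to a ball. For $n=4$ it uses an ambient isotopy of $H$, trivial near $\partial H$, carrying the wild arc $Q$ to a standard unknotted segment $I$, so that $H/Q\cong H/I$, which is almost diffeomorphic to a smooth $4$-ball. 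Without supplying these (or equivalent) arguments, your proof establishes the proposition only for $n\ge 6$.
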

\begin{proof}
To begin with let us show that the critical points of Takens functions admit cylindrical ball neighborhoods. 

Since $F|Q\equiv 0$, and $\partial F/\partial x_n>0$ over the complement to $Q$, the implicit function theorem implies that for each $\varepsilon\ne 0$ there is a function $g_\varepsilon(x_1, ..., x_{n-1})$ such that $F(x_1,..., x_{n-1}, g_\varepsilon(x_1,..., x_{n-1}))=\varepsilon$. In other words, the level set $F(x_1,..., x_n)=\varepsilon$ is the graph of the function $x_n=g_\varepsilon(x_1,..., x_{n-1})$. 
Furthermore, let $B_r$ denote a closed ball in $\R^{n-1}$ centered at $0$ of radius $r$ such that $B_r$ contains the compact set $Q$ in its interior. Then the absolute value of the restriction $g_\varepsilon|B_r$ is bounded by a constant $c_\varepsilon$ such that $c_\varepsilon\to 0$ as $\varepsilon\to 0$. 

For any subset $X$ of $\R^{n-1}$, let $C_\varepsilon(X)$ denote the compact rectangular cylindrical subset of $\R^n$ over $X$ bounded by the graphs of the functions $g_{-\varepsilon}|X$ and $g_{\varepsilon}|X$. In other words, the set $C_\varepsilon(X)$ consists of points $(x_1,..., x_{n})$ such that $(x_1,..., x_{n-1})$ is a point in $X$ and 
\[
   g_{-\varepsilon}(x_1,..., x_{n-1})\le x_n \le g_{\varepsilon}(x_1,..., x_n). 
\] 
Then $C_\varepsilon(X)$ is diffeomorphic to the cylinder $X\times [-\varepsilon, \varepsilon]$.   

Let $H(r, \varepsilon)$ denote a neighborhood of $Q$ that consists of all trajectories of the gradient vector field of $F$ in $F^{-1}[-\varepsilon, \varepsilon]$ that either contain a point in $B_r$ or has a limit point in $Q$.

\begin{lemma} There is a number $r_3>0$ such that the cylindrical neighborhood $H(r_3, \varepsilon)$ is diffeomorphic to the rectangular cylinder $C_\varepsilon(B_{r_3})$. 
\end{lemma}
\begin{proof}
Let $r_0<r_1 < r_2<r_3<r_4$ be numbers such that $B_{r_0}$ contains $Q$ in its interior, and 
$H(r_i, \varepsilon)\subset C_\varepsilon(B_{r_{i+1}})$ for $i=0,..., 3$. 
There is a smooth homotopy $v_t$ parametrized by $t\in [0,1]$ of the gradient vector field $v_0$ of $F$ to a vector field $v_1$ such that $v_t$ is constant over $H(r_0, \varepsilon)$, $v_t\cdot \partial/\partial x_n>0$ over $C_\varepsilon(B_{r_3})$ for all $t$, and $v_1$ is vertical up over $C_{\varepsilon}(B_{r_4}\setminus B_{r_2})$. 

Given a point $x$ in $\R^{n-1}\setminus Q$, there is a unique trajectory $\gamma_x$ of the vector filed $v_t$ passing through $x$. Let $y\in \R^n$ denote the point in $\R^n$ on the trajectory $\gamma_x$ such that $F(y)=s$. Then $j_{t, s}\co \R^{n-1}\setminus Q\to \R^n$ defined by $x\mapsto y$ is a smooth map that takes $B_{r_3}\setminus Q$ to the upper horizontal part $\{F\equiv s\}\cap H(r_3, s)$ of the boundary of $H(r_3, s)$.

Then there is  a smooth isotopy $\varphi_t$ of a neighborhood $H(r_3, \varepsilon)$ of $Q$ to  $C_\varepsilon(B_{r_3})$ defined by 
\[
\varphi_t(x_1,..., x_n) = j_{t, F(x_n)} (j_{0, F(x_n)}^{-1}(x_1,..., x_n)).
\]
We claim that $\varphi_1$ is a diffeomorphism onto $C_\varepsilon(B_{r_3})$. Indeed, let $(x_1,..., x_{n})$ be a point in $C_\varepsilon(B_{r_3})$ that is not on the trajectories of $v_1$ with limit points on $Q$. 
Then the point $(x_1,..., x_{n})$  is the image of the point 
 \[
     j_{0, F(x_n)}\circ j_{1, F(x_n)}^{-1}(x_1,..., x_n).
 \]
On the other hand, if $(x_1,..., x_n)$ is on a trajectory of $v_1$ with limit point on $Q$, then it is in $H(r_0, \varepsilon)$ where the isotopy $v_t$ is constant. Therefore, the point $(x_1,..., x_n)$ in this case is the image of itself.
\end{proof}  

Since $H=H(r_3, \varepsilon)$ is diffeomorphic to $C_\varepsilon(B_{r_3})$, we conclude that $H$ is a ball with corners $B_{r_3}\times [-\varepsilon, \varepsilon]$. Consequently, $H/Q$ is a cylindrical ball neighborhood of the critical point of $f$. 

 Now let us show that the cylindrical neighborhood $H/Q$ is not only homeomorphic to a ball, but it is almost diffeomorphic to a ball. Indeed, if $n\ne 4, 5$, then every smooth manifold homeomorphic to a ball is diffeomorphic to ball. Suppose that $n=5$.  Since the boundary of $H/Q$ is diffeomorphic to the boundary of $H$, which is almost diffeomorphic to a sphere, we conclude that $H/Q$ is almost diffeomorphic to a ball, see \cite[Proposition C]{Mi65}. Suppose now that $n=4$. Then there is an ambient  isotopy of $H$ that is trivial near the boundary, and that takes the wild arc $Q$ into the unknotted standard segment $I$. Consequently, the manifold $H/Q$ is diffeomorphic to the manifold $H/I$, which is almost diffeomorphic to a smooth $4$-ball. 
\end{proof}

\begin{proof}[Proof of Theorem~\ref{th:Tak}] It is known that the critical point of a Takens function is not cone-like. On the other hand, by Proposition~\ref{prop:21}, such a critical point admits a cylindrical neighborhood diffeomorphic to a ball. 
\end{proof} 

\begin{corollary}\label{c:Tak26}  The critical points of Takens functions are removable, i.e., for any neighborhood $U$ of a critical point $x_0$ of a Takens function $f$, there is a smooth function $g$ such that $g\equiv f$ outside of $U$ and $g$ has no critical points in $U$. 
\end{corollary}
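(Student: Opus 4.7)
\emph{Plan.} The approach combines Proposition~\ref{prop:21} with a Poincar\'e--Hopf index argument: the cylindrical ball neighborhood of $x_0$ forces the local index of $\nabla f$ at $x_0$ to vanish, which allows modifying $f$ on a small sub-ball to produce a function with no critical points while leaving $f$ unchanged outside.

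First I would fix $U$ and apply Proposition~\ref{prop:21} with sufficiently small parameters $r_3,\varepsilon$ to produce a cylindrical ball $B=H/Q\subset U$. After smoothing the corners, $B$ is a smooth $n$-disc whose boundary decomposes as $\partial B=\partial_-B\cup\partial_sB\cup\partial_+B$, with $\partial_\pm B=f^{-1}(\pm\varepsilon)\cap B$ smooth $(n-1)$-discs on which $f\equiv\pm\varepsilon$, and $\partial_sB$ consisting of (quotients of) gradient trajectories of $f$ along which $f$ varies monotonically from $-\varepsilon$ to $\varepsilon$.

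The central computation is that the Poincar\'e--Hopf index of $\nabla f$ at its unique zero $x_0$ in $B$ vanishes. Indeed, $\nabla f$ points inward on $\partial_-B$, outward on $\partial_+B$, and tangentially along $\partial_sB$; after a small outward perturbation of $\nabla f$ near $\partial_sB$ to achieve transversality (which creates no new zeros since $\nabla f$ is tangent and nonzero on $\partial_sB$), the Poincar\'e--Hopf formula for manifolds with boundary yields
\[
\mathrm{ind}_{x_0}(\nabla f)\;=\;\chi(B)-\chi(\partial_-B)\;=\;\chi(D^n)-\chi(D^{n-1})\;=\;0.
\]
Therefore the Gauss map $\nabla f/|\nabla f|\co \partial B'\to S^{n-1}$ on the boundary of a small open ball $B'\subset B$ centered at $x_0$ has degree zero, so (since $n\ge 4$) it is null-homotopic and extends to a smooth non-vanishing vector field on $B'$. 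Combined with $\nabla f$ outside $B'$, this yields a smooth non-vanishing vector field $X$ on all of $B$ equal to $\nabla f$ on $B\setminus B'$.

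Finally, I would construct $g$ from $X$: either invoke the h-principle for submersions (Phillips' theorem) to produce a smooth function $g$ on $B$ with $g=f$ on a neighborhood of $\partial B$ and $dg$ nowhere zero, or build $g$ explicitly by choosing a positive smooth function $\chi$ on $B$ with $\chi=|\nabla f|^2$ off $B'$ and compatible with the $f$-differences along each integral curve of $X$ crossing $B'$, then setting $g=f$ off $B'$ and extending across $B'$ by integrating $X(g)=\chi$ from the boundary data $g|\partial B'=f|\partial B'$. Extending $g=f$ outside $B$ gives a smooth function on $M$ that agrees with $f$ outside $U$ and has no critical points in $U$. The main obstacle is the simultaneous choice of the null-homotopic extension $X$ and the function $\chi$ so that the flow-integrated $g$ is genuinely smooth and matches $f$ consistently at both entry and exit points of every $X$-trajectory on $\partial B'$; the index-zero condition supplies exactly the homotopical flexibility required to arrange this.
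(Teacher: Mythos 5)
Your index computation is fine as far as it goes: with $\partial_-B\cong D^{n-1}$ as the incoming boundary, the Poincar\'e--Hopf formula indeed gives $\mathrm{ind}_{x_0}(\nabla f)=\chi(D^n)-\chi(D^{n-1})=0$, and hence $\nabla f|_{\partial B'}$ extends to a nowhere-zero field on $B'$. But the final step --- passing from a nowhere-zero vector field (equivalently, a nowhere-zero $1$-form extending $df$ rel boundary) to a smooth function with no critical points agreeing with $f$ near the boundary --- is a genuine gap, and it is the crux of the whole statement. Vanishing of the index is necessary but far from sufficient for removability: if $g$ had no critical points on the cobordism $W=f^{-1}[-\varepsilon,\varepsilon]\cap B$ and agreed with $f$ near $\partial W$, then $W$ would be a \emph{product} cobordism from $\partial_-B$ to $\partial_+B$ respecting the sides, which is a much stronger condition than $\chi(W)=\chi(\partial_-B)$. (Think of a degenerate critical point obtained by merging a non-cancelling Morse pair of indices $k$ and $k+1$: its index is $0$, yet the sublevel sets change homotopy type, so no removal is possible.) Correspondingly, neither of your two proposed mechanisms closes the gap: Phillips' h-principle for submersions holds on open manifolds and has no relative version fixing the boundary germ (otherwise every cobordism carrying a suitable nowhere-zero $1$-form would be a product); and the flow-integration of $X(g)=\chi$ fails because trajectories of an arbitrary nowhere-zero extension $X$ entering $B'$ need not exit it (they may remain in $B'$ for all time), and even when they do, nothing forces $f(\mathrm{exit})>f(\mathrm{entry})$. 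Your closing remark that ``the index-zero condition supplies exactly the homotopical flexibility required'' is precisely the unproved assertion.

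The paper avoids this entirely by exploiting what Proposition~\ref{prop:21} actually constructs: an explicit diffeomorphism of $H$ (hence of $H/Q$ away from the arc) with the rectangular cylinder $C_\varepsilon(B_{r_3})\cong B_{r_3}\times[-\varepsilon,\varepsilon]$ that respects the boundary decomposition --- $F$ is constant $\pm\varepsilon$ on the two horizontal faces and strictly increasing along the vertical face. This is exactly the product structure on the cobordism that your argument never establishes; given it, one takes $f'$ to be (an interpolation toward) the projection onto the $[-\varepsilon,\varepsilon]$ factor, which matches $f$ near $\partial(H/Q)$ and is a submersion. If you want to keep your framework, you must replace the index argument by an appeal to this product structure; the index computation by itself cannot carry the proof.
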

\begin{proof} In the proof of Proposition~\ref{prop:21} we may choose $\varepsilon, r_0, r_1, r_2, r_3,$ and $r_4$ so small that $H/Q\subset U$. We note that the boundary of the ball $H/Q$ is diffeomorphic to the boundary of the ball $H$, and the function $f$ near $\partial (H/Q)$ can be identified with the function $F$ near $\partial H$. On the other hand, the boundary $\partial H$ consists of the upper and lower horizontal parts $\partial_hH=\partial H\cap \{F=\pm \varepsilon\}$ and the vertical part $\partial_vH=\overline{\partial H\setminus \partial_hH}$ such that $F$ is constant over each component of $\partial_hH$ and increasing over $\partial_vH$. Therefore, there is a function $f'$ on $H/Q$ such that $f'$ agrees with $f$ near $\partial (H/Q)=\partial H$ and $f'$ has no critical points. Finally, we define $g$ to be a function on $\R^{n}$ such that $g\equiv f'$ over $H/Q$ and $g\equiv f$ otherwise.  
\end{proof}

\end{document}